\begin{document}

\newtheorem{thm}{Theorem}[section]
\newtheorem{thm-non}{Theorem}
\newtheorem{cor}[thm]{Corollary}
\newtheorem{lem}[thm]{Lemma}
\newtheorem{prop}[thm]{Proposition}
\renewcommand\[{\begin{equation}}
\renewcommand\]{\end{equation}}

\theoremstyle{remark}
\newtheorem{notations}[thm]{Notations}
\newtheorem{notation}[thm]{Notation}

\theoremstyle{definition}
\newtheorem{defn}[thm]{Definition}
\newtheorem{defns}[thm]{Definitions}

\theoremstyle{remark}
\newtheorem{rem}[thm]{Remark}
\newtheorem{eg}[thm]{Example}
\newtheorem{ob}[thm]{Observation}
\numberwithin{equation}{section}
\newcommand{\norm}[1]{\left\Vert#1\right\Vert}
\newcommand{\abs}[1]{\left\vert#1\right\vert}
\newcommand{\set}[1]{\left\{#1\right\}}
\newcommand{\Real}{\mathbb R}
\newcommand{\C}{\mathbb{C}}
\newcommand{\Q}{\mathbb{Q}}
\newcommand{\Pro}{\mathbb{P}}
\newcommand{\eps}{\varepsilon}
\newcommand{\To}{\longrightarrow}
\newcommand{\TTo}{\dashrightarrow}
\newcommand{\BX}{\mathbf{B}(X)}
\newcommand{\Z}{\mathbb{Z}}
\newcommand{\K}{\mathbb{K}}
\newcommand{\tb}{\textbf}
\newcommand{\mc}{\mathcal}
\newcommand{\mr}{\mathrm}
\newcommand{\mf}{\mathbf}
\newcommand{\mb}{\mathbb}

\newcommand{\TC}{(\mathbb{C}^*)}
\newcommand{\TK}{(\mathbb{K}^*)}
\newcommand{\Sev}{\mathrm{Sev}(\De,\delta )}
\newcommand{\TS}{\mathrm{Trop}(\mathrm{Sev})}
\newcommand{\p}{\phi}
\newcommand{\w}{\omega}
\newcommand{\n}{\mathrm{in}_}

\newcommand{\Tr}{\mathrm{Trop}}
\newcommand{\bs}{\boldsymbol}
\newcommand{\bsm}{\boldsymbol{m}}
\newcommand{\SD}{\mathcal{S}(\Deelta)}
\newcommand{\TSD}{\mathcal{T}\mathcal{S}(\De)}
\newcommand{\T}{\mathbb{T}}

\newcommand{\ta}{\theta}
\newcommand{\al}{\alpha}
\newcommand{\be}{\beta}
\newcommand{\la}{\lambda}
\newcommand{\La}{\Lambda}
\newcommand{\ph}{\phi}
\newcommand{\ga}{\gamma}
\newcommand{\Ga}{\Gamma}
\newcommand{\de}{\delta}
\newcommand{\De}{\Delta}
\newcommand{\Si}{\Sigma}
\newcommand{\si}{\sigma}
\newcommand{\ep}{\epsilon}
\newcommand{\ul}{\underline}
\newcommand{\ol}{\overline}
\newcommand{\uli}{\underline{i}}
\newcommand{\ulm}{\underline{m}}
\newcommand{\inverse}{^{-1}}

\newcommand{\ad}{\mr{ad}}
\newcommand{\Ann}{\mr{Ann}}
\newcommand{\rank}{\mr{rank}}
\newcommand{\Conv}{\mr{Conv}}

\newcommand{\mfk}{\mathbf{k}}
\newcommand{\mfg}{\mathbf{g}}
\newcommand{\mkg}{\mk{g}}
\newcommand{\mkh}{\mk{h}}
\newcommand{\mcF}{\mathcal{F}}
\newcommand{\mcL}{\mathcal{L}}
\newcommand{\mcX}{\mathcal{X}}
\newcommand{\mcO}{\mathcal{O}}
\newcommand{\mcT}{\mc{T}}

\newcommand{\D}{\Delta}

\newcommand{\WW}{\tilde{W}}
\newcommand{\ww}{\tilde{w}}
\newcommand{\Pic}{\mr{Pic}}
\newcommand{\Div}{\mr{Div}}
\newcommand{\Proj}{\mr{Proj}}
\newcommand{\Spec}{\mr{Spec}}
\newcommand{\SL}{\mr{SL}}
\newcommand{\GL}{\mr{GL}}
\newcommand{\Lie}{\mr{Lie}}
\newcommand{\Supp}{\mr{Supp}}
\newcommand{\Mor}{\mr{Mor}}
\newcommand{\init}{\mr{init}}
\newcommand{\lee}{\langle}
\newcommand{\ree}{\rangle}
\newcommand{\ti}{\tilde}
\newcommand{\wt}{\mr{wt}}
\newcommand{\Ad}{\mr{Ad}}
\newcommand{\Aut}{\mr{Aut}}
\newcommand{\End}{\mr{End}}
\newcommand{\Trop}{\mr{Trop}}

\newcommand{\red}{\bf\color{red}}
\newcommand{\comment}[1] {{\ \color{blue}\scriptsize{#1}\ }}

\setlength{\marginparwidth}{0.7in}

\newcommand{\todo}[2][0.8]{\vspace{1 mm}\par \noindent
\marginpar{\textsc{To Do}} \framebox{\begin{minipage}[c]{#1
\textwidth} \tt #2 \end{minipage}}\vspace{1 mm}\par}
\newcommand{\aha}[2][]{\vspace{1 mm}\par \noindent
\framebox{\begin{minipage}[c]{#1 \textwidth}  #2
\end{minipage}}\vspace{1 mm}\par}
\newcommand{\fix}[1]{\marginpar{\textsc{Fix!}}\framebox{#1}}

\newcommand{\mnote}[1]{${}^*$\marginpar{\footnotesize ${}^*$#1}}


\newcommand*{\boxednumber}[1]{%
      \expandafter\readdigit\the\psimexpr#1\relax\relax
}
\newcommand*{\readdigit}[1]{%
      \ifx\relax#1\else
            \boxeddigit{#1}%
             \expandafter\readdigit
      \fi
}
\newcommand*{\boxeddigit}[1]{\fbox{#1}\hspace{-\fboxrule}}

\title[]{Secondary Fans and Tropical Severi Varieties}%

\author{Jihyeon Jessie Yang}
\address{Department of Mathematics and
Statistics\\ McMaster University\\ 1280 Main
Street West\\ Hamilton, Ontario L8S4K1\\ Canada}
\email{jyang@math.mcmaster.ca}
\urladdr{\url{http://www.math.mcmaster.ca/~jyang/}}

\keywords{Secondary fan, tropical Severi variety, coherent subdivisions of polygons, intersection multiplicity}
\subjclass[2000]{Primary:14T05; Secondary: 14N10}

\thanks{The author thanks Alexander Esterov and Eric Katz for useful discussions. 
}

\date {\today}

\begin{abstract}
This article studies the relationship between tropical Severi varieties and secondary fans. In the case when tropical Severi varieties are hypersurfaces
this relationship is very well known; specifically, in this case, a tropical Severi variety of
codimension 1 is a subfan of the corresponding secondary fan. It was expected
for some time that this continues to hold more generally, but Katz found
a counterexample in codimension 2, showing that this relationship is more subtle. 
The two main results in this paper are as follows. The first theorem finds
a simple condition under which a tropical Severi variety cannot be a subfan
of the corresponding secondary fan. The second theorem provides a partial
converse, namely, we find conditions under which a cone of the secondary fan
is fully contained in the tropical Severi variety. As a first application of these results, we also find a combinatorial formula for the tropical intersection multiplicities for secondary fans.

\end{abstract}

\maketitle

\setcounter{tocdepth}{1}
\tableofcontents

\section{Introduction}

Severi varieties are  classical objects in algebraic geometry  which go back to F. Enriques \cite{Enriques} and F. Severi \cite{Severi}. They are  projective varieties which parameterize  nodal curves on  toric surfaces. Recent developments of tropical geometry suggest that these classical algebro-geometric objects can be viewed with a different perspective, namely the tropicalizations of Severi varieties ({\em tropical Severi varieties}). These are polyhedral objects on which combinatorial tools can be used. In \cite{JYang} the author found certain descriptions of tropical Severi varieties and used them to provide a tropical intersection theoretic-computation of the degrees of Severi varieties.  In particular, to each point on tropical Severi varieties some  subdivisions of  polygons can be assigned and the tropical intersection multiplicities appearing in the computation are all described in terms of these simple objects, namely, subdivisions of polygons (in fact, only triangles and parallelograms are involved.) 

On the other hand, there are very well-known polyhedral fans which parametrize subdivisions of polygons, called {\em secondary fans}. While the constructions of tropical Severi varieties are algebro-geometric, the constructions of secondary fans are purely combinatorial, although secondary fans were introduced to study the discriminantal   varieties in \cite{GKZ} and also have rich connections to algebraic geometry \cite{CLS, GKZ}. 

So there is a very natural question: {\bf how are secondary fans and tropical Severi varieties related?} Initially it had been expected for a while that a tropical Severi variety for a toric surface $X_{\De}$ from a nondegenerate lattice polygon $\De$ is a subfan of the secondary fan of $\mc{A}=\De\cap\Z^2$, the set of all lattice points on $\De$.  In fact, it is straightforward to show that this statement holds true when the tropical Severi variety is a hypersurface (that is, of codimension 1) (Remark \ref{rem:determinant}).  Also more explicit descriptions of tropical Severi varieties of codimension 1 were studied in \cite {DFS, MMS}. However, Katz\cite{Katz} found  a counterexample showing that there is a  tropical Severi variety of codimension 2 on which there does not exist any subfan structure of the corresponding secondary fan. 

This paper provides some answers to the question above for general tropical Severi varieties. In Theorem \ref{thm:1}, we find a simple sufficient condition under which tropical Severi variety cannot be a subfan of the corresponding secondary fan. This implies that the combinatorial object, tropical Severi variety, contains a certain data which comes from algebro-geometric properties of Severi variety. Theorem \ref{thm:2}  addresses the opposite direction. Namely, it describes when a cone in the secondary fan is fully contained in the tropical Severi variety. 

  As a first application of this understanding of the relationship between tropical Severi varieties and secondary fans, in Theorem \ref{thm:3} we obtain a combinatorial formula for the tropical intersections of secondary fans with tropical linear spaces. The proof uses a  result on tropical intersections obtained in the author's previous paper \cite{JYang}

\section{Secondary fans and tropical Severi varieties} \label{sec:preliminaries}

\subsection{Secondary Fan} We review on the study of secondary fans. The main referencs for this subsection is \cite[\S 7]{GKZ}.    Simply speaking, the secondary fan of a finite subset $\mc{A}$ of the lattice $\Z^{k}$ is a  fan in $\Real^{|\mc{A}|}$ whose cones parameterize  the coherent marked subdivisions of $(\De, \mc{A})$, where $\De$ is the convex hull of $\mc{A}$.  In this paper, we only consider the case of $k=2$ and $\mc{A}$ is the set of all lattice points on a non-degenerate convex lattice polygon $\De$. (that is, the dimension of $\De$ is 2.) Let us fix the   precise definitions for the notions which we will use in this paper. 

 A \emph{marked polygon} is a pair $(\De,
\mc{A})$ where $\De\subset\Real^{2}$ is a  convex lattice 
polygon   and $\mc{A}\subset \Z^2$ is a finite subset of $\De$ 
containing all the vertices of $\De$ so that the
convex hull of $\mc{A}$ coincides with $\De$. We always assume that $\De$ is {\em non-degenerate}, that is, $\mr{dim}(\De)=2$.

Now  let $\mc{A}=\De\cap\Z^2$, the set of all lattice points in $\De$. Then a (marked)
\emph{subdivision} $S$ of $\De$
 is a collection of marked
 polygons\[\{(\De_i, \mc{A}_i): i=1,\dots,m\},\qquad m\in\Z_{>0}\]  such that
 \begin{enumerate}\item each $\mc{A}_i$ is a subset of $\mc{A}$ and
 each
 $\De_i$ is non-degenerate;
 \item any intersection $\De_i\cap \De_j$ is a face (possibly empty) of both $\De_i$ and
 $\De_j$, and \[\mc{A}_i\cap(\De_i\cap \De_j)=\mc{A}_j\cap(\De_i\cap  \De_j);\]
 \item the union of all $\De_i$ coincides with $\De$.
 \end{enumerate}

 A {\em triangulation} of $\De$ is a subdivision of $\De$ such that for all $i$,  $\De_i$ are triangles and $\mc{A}_i=\mr{Vert}(\De_i)$, the set of vertices of $\De_i$. (That is, only vertices of the triangles are marked.)

 Let $S$ and $S'$ be subdivisions of $\De$. We say that $S$ {\em refines} $S'$ if for each $j$ the collection of $(\De_i,\mc{A}_i)$ such that $\De_i\subset \De_j'$ forms a subdivision of $(\De_j',\mc{A}_j')$, where $S=\{(\De_i, \mc{A}_i): i=1,\dots,m\}, S'=\{(\De_j', \mc{A}_j'):j=1,\dots,m'\}, m, m'\in\Z_{>0}$.
 This makes the set of all subdivisions of $\De$ into a poset. Triangulations are precisely minimal elements of this poset.  Also the subdivision $\{(\De,\mc{A})\}$ is the unique maximal element.

 A subdivision $S$ of  $\De$ is called \emph{coherent} if
it can be constructed from a function
$\psi\in\Real^{\mc{A}}$ as follows: Let
$G_{\psi}\subset\Real^3$ be  the convex hull of
the set
\[\{(a, y): \quad y\le\psi(a),\quad a\in\mc{A},\quad y\in\Real\}.\]
The upper boundary of $G_{\psi}$ is the
graph of a concave piecewise-linear function which we call the \emph{concave hull} of $\psi$ and denote by $\mr{cc}(\psi)$,
\[\begin{array}{cccc}
  \mr{cc}(\psi): &\De & \rightarrow & \Real, \\
  & x& \mapsto & \mr{max}\{y:(x,y)\in G_{\psi}\}.
\end{array}\] (The {\em upper boundary} of $G_{\psi}$ is by definition the union of faces of $G_{\psi}$ which do not contain vertical half lines.)
 Then $S$ coincides with \[\De_{\psi}:=\{(\De_i,
\mc{A}_i):\quad i=1,\dots,m\},\quad
m\in\Z_{>0},\] where $\De_i\subset  \De$ are the
domains
 of linearity of $\mr{cc}(\psi)$ and
 $\mc{A}_i\subset \De_i$ consists of all
$a\in\mc{A}\cap \De_i$ such that
$\mr{cc}(\psi)(a)=\psi(a)$ (i.e., the point
$(a,\psi(a))$ lies on the upper boundary of
$G_{\psi}$ and thus it is ``visible'' (or ``marked''))

 Given a coherent subdivision
$S$ of $\De$, let
$\mc{C}(S)\subset\Real^{\mc{A}} $ be
the set of all functions $\psi$ such that
$S$ refines $\De_{\psi}$.

\begin{prop}\cite[\S 7]{GKZ} For any
coherent subdivision $S$ of a non-degenerate convex lattice polygon $\De$, the set $\mc{C}(S)\subset\Real^{\mc{A}}$ is
a closed convex polyhedral cone so that the set of all
such cones is a complete fan in $\Real^{\mc{A}}$. Furthermore, $\mc{C}(S)=\overline{\mc{C}(S)^{\circ}}$, where the relative interior  $\mc{C}(S)^{\circ}$ is the set of all $\psi\in\Real^{\mc{A}}$ sucht that $S$ coincides with $\De_{\psi}$.
\end{prop}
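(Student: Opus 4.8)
The plan is to first trade the ``subdivision'' condition defining $\mc{C}(S)$ for a finite list of linear equalities and inequalities in $\psi\in\Real^{\mc{A}}$. Fix $S=\{(\De_i,\mc{A}_i):i=1,\dots,m\}$, and attach to each $\psi$ the piecewise function $g_{S,\psi}$ with $g_{S,\psi}|_{\De_i}$ the affine function interpolating the values $\psi(a)$, $a\in\mc{A}_i$; this makes sense as soon as, for every $i$, the points $(a,\psi(a))\in\Real^3$, $a\in\mc{A}_i$, are coplanar --- a system of linear equations on $\psi$, vacuous for cells carrying only three marked points --- and then the pieces automatically agree along every shared edge, since such an edge contains two points marked in both of its cells, namely its endpoints. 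I would then establish
\[
\psi\in\mc{C}(S)\iff g_{S,\psi}\ \text{is well defined, concave, and}\ g_{S,\psi}(a)\ge\psi(a)\ \text{for all}\ a\in\mc{A},
\]
equivalently $g_{S,\psi}=\mr{cc}(\psi)$. For ``$\Leftarrow$'': since $\mr{cc}(\psi)$ is the least concave function on $\De$ dominating $\psi$ on $\mc{A}$, we get $\mr{cc}(\psi)\le g_{S,\psi}$; equality is forced at the points $S$ marks, and then --- using that $\mc{A}_i$ contains three non-collinear points and $\mr{cc}(\psi)$ is concave --- it propagates over all of each $\De_i$, so $g_{S,\psi}=\mr{cc}(\psi)$, whence the domains of linearity of $\mr{cc}(\psi)$ are unions of the $\De_i$ and $S$ refines $\De_\psi$. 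For ``$\Rightarrow$'': if $S$ refines $\De_\psi$ then $\mr{cc}(\psi)$ is affine on each $\De_i$ and agrees with $\psi$ at the points $S$ marks, so it is $g_{S,\psi}$. As the three conditions on the right are linear (and scale invariant, $\mr{cc}$ being positively homogeneous), this exhibits $\mc{C}(S)$ as a closed convex polyhedral cone; note it contains the $3$-dimensional lineality space of restrictions to $\mc{A}$ of affine functions on $\Real^2$, because $\De_{\psi+\ell}=\De_\psi$.

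Next I would identify the relative interior. One checks that $\De_\psi=S$ holds exactly when $g_{S,\psi}$ is well defined, the concavity inequality is \emph{strict} across every interior edge of $S$ (so the domains of linearity of $g_{S,\psi}$ are exactly the $\De_i$), and $g_{S,\psi}(a)>\psi(a)$ at every $a\in\mc{A}$ not marked by $S$. Hence $\mc{C}(S)^{\circ}=\{\psi:\De_\psi=S\}$ is the intersection of the affine subspace ``$g_{S,\psi}$ well defined'' with finitely many open half-spaces; it is convex, relatively open, and non-empty precisely because $S$ is coherent. Given $\psi\in\mc{C}(S)$ and $\psi_0\in\mc{C}(S)^{\circ}$, each strict defining functional $b$ of $\mc{C}(S)^{\circ}$ satisfies $b(\psi)\ge 0$ and $b(\psi_0)>0$, so $b>0$ on the half-open segment $(\psi,\psi_0]$; thus $(\psi,\psi_0]\subset\mc{C}(S)^{\circ}$. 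Letting $\psi$ range over $\mc{C}(S)$ gives $\overline{\mc{C}(S)^{\circ}}=\mc{C}(S)$, and the same segment property together with convexity and relative openness identifies $\mc{C}(S)^{\circ}$ with $\mr{relint}\,\mc{C}(S)$; in particular $\mc{C}(S)=\overline{\mc{C}(S)^{\circ}}$.

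It remains to verify the fan axioms. Completeness is immediate, as $\psi\in\mc{C}(\De_\psi)$ for every $\psi$; and every face of $\mc{C}(S)$ is cut out by promoting some of its defining inequalities to equalities, which makes the common function $g_{S,\psi}$ affine across more edges or forces it through more points of $\mc{A}$ --- that is, it becomes $g_{S',\psi}$ for a coarsening $S'$ of $S$ (coherent, since realized by any $\psi$ in the relative interior of the face) --- so the face equals $\mc{C}(S')$. The remaining axiom, that $\mc{C}(S_1)\cap\mc{C}(S_2)$ is a \emph{single} common face of the two cones rather than a union of faces, is the step I expect to be the genuine obstacle: it is where the many local linear descriptions must be shown to assemble into one global fan, and it also demands careful bookkeeping of the \emph{markings}, not just the underlying polygons. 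The cleanest way to obtain it --- the route of \cite[\S 7]{GKZ} --- is to realize $\{\mc{C}(S)\}$ as the normal fan of the secondary polytope $\Sigma(\mc{A})=\mr{conv}\{\phi_T:T\ \text{a triangulation of}\ \De\}$, where $\phi_T(a)=\sum_{\De_i\ni a}\mr{area}(\De_i)$; from the identity $\langle\psi,\phi_T\rangle=3\int_\De g_{T,\psi}$, the pointwise bound $g_{T,\psi}\le\mr{cc}(\psi)$, and the fact that equality holds iff $T$ refines $\De_\psi$, one finds that the face of $\Sigma(\mc{A})$ on which $\langle\psi,\cdot\rangle$ is maximal depends only on $\De_\psi$ and has normal cone $\mc{C}(\De_\psi)$. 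Since the normal fan of a polytope is automatically a complete polyhedral fan, this finishes the proof.
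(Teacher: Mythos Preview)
The paper does not prove this proposition at all: it is stated with the citation \cite[\S 7]{GKZ} and left without argument, since it is a foundational result imported from Gelfand--Kapranov--Zelevinsky rather than something the paper establishes. Your write-up is therefore not competing with any argument in the paper; it is a self-contained sketch of the GKZ proof, and you explicitly say as much when you invoke the secondary polytope $\Sigma(\mc{A})$ and the identity $\langle\psi,\phi_T\rangle=3\int_\De g_{T,\psi}$ to handle the fan axioms. As a sketch of that argument it is sound: the linear-inequality description of $\mc{C}(S)$ via the interpolant $g_{S,\psi}$, the identification of the relative interior by making the edge-concavity and unmarked-point inequalities strict, and the reduction of the intersection axiom to the normal-fan structure of the secondary polytope are all correct and are exactly how \cite{GKZ} proceeds.
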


 Note that any cone $\mc{C}(S)$ in the Proposition above contains the line $\Real\cdot \mf{1}$ consisting of the constant functions. The induced  complete fan in the quotient space $\Real^{\mc{A}}/\Real\cdot \mf{1}$  is called the {\em secondary fan} of $\De$ (recall that $\mc{A}=\De\cap\Z^2)$  and denoted by $\Si(\De)$. For simplicity we will use the same notation $\mc{C}(S)$ for the induced cone  in the quotient space  $\Real^{\mc{A}}/\Real\cdot \mf{1}$.

\parbox{8cm}{
\begin{eg}
 The figure on the right illustrates the case when $\De$ is the segment $\mr{Conv}\{0,1,2,3,4\}$, the convex hull of the points $0,1,2,3,4$.
The
 (marked coherent) subdivision
$\De_{\psi}$ consists of
$(\De_1,\{0,2\})$ and $(\De_2,\{2,3,4\})$. Note that the
point $1$ is not marked.
\begin{picture}(0,0)(-210,-40)
\curve(0,0,80,0) \put(-2,8){$\bullet$}
\put(18,8){$\bullet$}\put(38,28){$\bullet$}\put(58,18){$\bullet$}
\put(78,8){$\bullet$}
\put(0,10){$\vector(0,-1){40}$}\put(20,10){$\vector(0,-1){40}$}
\put(40,30){$\vector(0,-1){60}$}\put(60,20){$\vector(0,-1)
{50}$}\put(80,10){$\vector(0,-1){40}$}
\linethickness{0.7mm}
\curve(0,10,40,30)\curve(40,30,80,10)
\end{picture}\\
\end{eg}}

\subsection{Tropical plane curves and connection to secondary fan}\label{sec:tropical plane curve}
In this subsection, we review the definition of a tropical plane curve and find an explicit connection to a secondary fan.

As before let $\De$ be a non-degenerate convex lattice polygon in $\Real^2$ and let $\mc{A}=\De\cap\Z^2$.  Given a function $\psi\in\Real^{\mc{A}}$, the {\em tropical plane curve} $\tau_{\psi}$ with {\em degree} $\De$ is  the
corner locus of the piecewise-linear  function
\[\Real^2\rightarrow \Real, \quad \alpha\mapsto \max_{a\in \mc{A}}\{a\cdot\alpha + \psi(a)\}.\]

Note that the map  $\psi\mapsto \tau_{\psi}$ is not injective. In fact, the tropical plane curve $\tau_{\psi}$ is uniquely
determined by  $\mr{cc}(\psi)_{\mc{A}}$, the concave hull of
$\psi$ (for definition see \S 2.1) restricted on $\mc{A}$. Also it is known that the tropical plane curve $\tau_{\psi}=\tau_{\mr{cc}(\psi)_{\mc{A}}}$ is dual to the subdivision $\De_{\mr{cc}(\psi)}$ of $\De$, as stated in the following Proposition.  Note that the subdivion induced by a {\em concave} function, $\De_{\mr{cc}(\psi)}$, has the property that every lattice point is marked (or visible), that is, $\mc{A}_i=\De_i\cap\Z^2$ for every $i$. 
We introduce several  notions to discribe this type of subdivision in the Definition below. 

\begin{prop}(\cite[\S 2.5.1]{IMS})
The coherent subdivision $\D_{\mr{cc}(\psi)}$ of $\D$ is dual
to the tropical curve  $\tau_{\psi}$ in the
following sense :
\begin{itemize}\item the components of $\Real^2\setminus
 \tau_{\psi}$ are in  1-to-1 correspondence with $\mr{Vert}(\D_{\mr{cc}(\psi)})$;
 \item the edges of $\tau_{\psi}$ are in 1-to-1 correspondence with
 $\mr{Edges}(\D_{\mr{cc}(\psi)})$ so that an edge $e$ of $\tau_{\psi}$ is dual to an
  edge of $\D_{\mr{cc}(\psi)}$ which is orthogonal to $e$ with the lattice length
  equal to the {\em weight} of $e$. (For the definition of the weight of an edge of a tropical curve, see \cite[\S 2.5.1]{IMS});
 \item the vertices of $\tau_{\psi}$ are in 1-to-1 correspondence
 with the 2-dimensional faces of $\D_{\mr{cc}(\psi)}$
  so that the valency of a vertex of $\tau_{\psi}$ is
 equal to the number of sides of the dual face.
\end{itemize}
\end{prop}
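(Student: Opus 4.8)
The plan is to derive the three claimed bijections from the elementary convex geometry of the graph solid $G_\psi\subset\Real^3$, that is, from polar duality for the regular subdivision $\De_{\mr{cc}(\psi)}$. Since $\tau_\psi=\tau_{\mr{cc}(\psi)_{\mc{A}}}$, I would first replace $\psi$ by $\mr{cc}(\psi)$ and assume that $\psi$ is the restriction to $\mc{A}$ of the concave piecewise-linear function whose graph is the upper boundary of $G_\psi$; then every lattice point of every cell $\De_i$ of $\De_{\mr{cc}(\psi)}$ is marked, so $\mc{A}_i=\De_i\cap\Z^2$. Write $f(\al)=\max_{a\in\mc{A}}\{a\cdot\al+\psi(a)\}$ for the convex piecewise-linear function whose corner locus is $\tau_\psi$, and for $\al\in\Real^2$ set $M(\al)=\{a\in\mc{A}:a\cdot\al+\psi(a)=f(\al)\}$. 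The observation that drives everything is that $\mr{Conv}(M(\al))$ is exactly the image in $\Real^2$ of the face of the upper boundary of $G_\psi$ on which the linear function $(x,y)\mapsto x\cdot\al+y$ is maximal; hence $\al\mapsto\mr{Conv}(M(\al))$ realizes the usual inclusion-reversing correspondence between the strata of $\Real^2$ cut out by $\tau_\psi$ and the faces of $\De_{\mr{cc}(\psi)}$, and the three bullets are simply the restrictions of this correspondence to strata of dimension $2$, $1$ and $0$.

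Carrying this out: the domains of linearity of $f$ are the polyhedra $R_a=\{\al:a\cdot\al+\psi(a)\ge a'\cdot\al+\psi(a')\ \text{for all }a'\in\mc{A}\}$, and $\Real^2\setminus\tau_\psi$ is the disjoint union of those interiors $R_a^\circ$ that are nonempty. If $\al\in R_a^\circ$, then the affine function $L(x)=f(\al)-x\cdot\al$ satisfies $L\ge\psi$ on $\mc{A}$ with equality only at $a$, so $(a,\psi(a))$ is an exposed vertex of the upper boundary of $G_\psi$; conversely every vertex of $\De_{\mr{cc}(\psi)}$ supports such an $L$, so $R_a^\circ\ne\emptyset$ precisely when $a\in\mr{Vert}(\De_{\mr{cc}(\psi)})$, which is the first bullet. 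An edge $e$ of $\tau_\psi$ is a maximal segment contained in $\partial R_a\cap\partial R_{a'}$ for two adjacent regions; along $e$ one has $(a-a')\cdot\al=\psi(a')-\psi(a)$, so $e$ lies on a line orthogonal to $[a,a']$, and by the observation above the edge of $\De_{\mr{cc}(\psi)}$ dual to $e$ is exactly $[a,a']$ together with all the marked lattice points it contains; this is the orthogonality part of the second bullet. Finally, at a vertex $v$ of $\tau_\psi$ the set $M(v)$ affinely spans $\Real^2$, so $\mr{Conv}(M(v))$ is a $2$-cell $\De_j$; near $v$ the germ of $f$ is the support function of $\De_j$, whose corner locus is the inner normal fan of $\De_j$, so the edges of $\tau_\psi$ at $v$ correspond to the sides of $\De_j$ and the valency of $v$ equals the number of sides of $\De_j$. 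That the three bijections are compatible with incidence (a region's closure meets $e$, resp.\ contains $v$, iff the dual vertex lies on the dual edge, resp.\ the dual $2$-cell) follows from the inclusion-reversing property of the face poset of $G_\psi$.

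The step where I expect to need real care, and the only genuine obstacle, is the numerical part of the second bullet: identifying the \emph{weight} of the tropical edge $e$, as defined in \cite[\S 2.5.1]{IMS}, with the \emph{lattice length} of the dual segment $[a,a']$. This is a local statement at a generic point of $e$, where one must unwind the IMS definition of weight --- given there via primitive integer directions and the balancing condition --- and match it against the number of unit-lattice segments into which $[a,a']$ is cut by the marked lattice points lying on it; here it is essential that $\psi$ is concave, so that these intermediate points are all marked and the dual edge of $\De_{\mr{cc}(\psi)}$ therefore has lattice length equal to its full integer length. Everything else is a direct translation of the polar duality above and uses nothing beyond the definitions recalled in \S\ref{sec:preliminaries}.
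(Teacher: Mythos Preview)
The paper does not supply its own proof of this proposition: it is stated as a citation of \cite[\S 2.5.1]{IMS} and used as background, with no argument given. So there is nothing in the paper to compare your proposal against.

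That said, your outline is the standard argument and is correct in its essentials. The identification of $\mr{Conv}(M(\al))$ with the projection of the face of the upper boundary of $G_\psi$ on which $(x,y)\mapsto x\cdot\al+y$ is maximal is exactly the Legendre-type duality that underlies the result, and the three bullets do drop out of it as you describe. You are also right that the only point requiring care is the numerical match between the weight of a tropical edge and the lattice length of the dual segment; your remark that concavity of $\psi$ forces all intermediate lattice points on $[a,a']$ to be marked is the key observation there, and once unwound against the definition in \cite[\S 2.5.1]{IMS} the identification is immediate.
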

    \begin{defns}
  \begin{enumerate}

 \item We say $\psi\in\Real^{\mc{A}}$ is {\em effective} if $\psi=\mr{cc}(\psi)_{\mc{A}}$. 
 \item A subdivision $S$ of $\De$ is called {\em effective} if it is the coherent subdivision $\De_{\psi}$ for some effective element $\psi\in\Real^{\mc{A}}.$ Equivalently, an effective subdivision of $\De$ is a coherent subdivision such that every lattice point is marked, that is, for every $i$, $\mc{A}_i=\De_i\cap\Z^2.$ 
  
\item For a $\psi\in\Real^{\mc{A}}$, define the {\em rank} of $\psi$ to be the dimension of the   cone $\mc{C}(\De_{\mr{cc}(\psi)})$ in the secondary fan $\Si(\De)$  of the effective division $\De_{\mr{cc}(\psi)}$. 

\end{enumerate}
\end{defns}

  Therefore,   given an effective subdivision $S$ of $\De$ we can identify  the cone $\mc{C}(S)^{\circ}$ in the secondary fan $\Si(\De)$ with the set of all tropical plane
curves which are dual to $S$. Thus the {\em effective part} of $\Si(\De)$, the union of such cones for all effective subdivisions, can be identified with the set of  all tropical plane curves with degree $\De$.

\begin{rem}$ $ In \cite{MMS} similar notions are introduced.  The  {\em type} of the subdivision $\De_{\psi}$ (``forgetting all lattice points'') is essentially same as  the effective subdivision $\De_{\mr{cc}(\psi)}$ (``marking all lattice points'').    The rank of $\psi$ is equal to the {\em type dimension} of the type of $\De_{\psi}$.  Also the subdivision  $\De_{\psi}$ is effective if and only if it is of  {\em maximal dimensional type}. 

\end{rem}

\subsection{Tropical plane curves and tropical Severi varieties} In this subsection we review and collect some known results on tropical Severi varieties, which are needed to prove the main theorems in the next section.  Also in this subsection we show the connection between points in tropical Severi varieties and tropical plane curves. 

\subsubsection{Severi variety.} \cite{CH, Fulton2}
As before, let $\De$ be a non-degenerate convex lattice polygon in $\Real^2$.  Denote by  $X_{\De}$  the projective
toric
surface constructed from  $\De$,  $\Pro_{\De}$
 the tautological linear system on $X_{\De}$ and
 $\T_{\De}$  the big open torus of
$\Pro_{\De}$. That is, $\Pro_{\De}$ is the projective space parameterizing curves on the surface $X_{\De}$ defined by polynomials of the following form, \[ f=\sum_{a\in\mc{A}} c_ax^a,\] where $\mc{A}=\De\cap\Z^2$, $x^a$ is the term $x_1^{a_1}x_2^{a_2}$, and the coefficients $c_a$ are taken from the base field. By ordering the elements of $\mc{A}$ we can identify $\Pro_{\De}$ with $\Pro^{n-1}$, where $n=|\mc{A}|$. 

By definition, the {\em Severi variety} $\Sev$ is the (Zariski) closure of the subset of $\Pro_{\De}=\Pro^{n-1}$  consisting of curves with exactly $\de$ nodes (ordinary double points) as their only singularities. It is known that the dimension of $\Sev$ is equal to $n-1-\de$, when the nonnegative integer $\de$ is at most the number of interior lattice points of $\De.$ 
In particular, when $\de=1$, $\mr{Sev}(\De,1)$ is a hypersurface in $\Pro_{\De}$ which is known as $\mc{A}$-discriminantal variety, where $\mc{A}=\De\cap\Z^2$ (\cite{GKZ}).\\

\subsubsection{Tropical Severi variety.}
Now we consider the tropicalization of $\mc{X}=\Sev$, which we call the {\em tropical Severi variety} and denote by $\Tr(\mc{X})$. First, we fix the base field  equipped with a non-Archimedean valuation. In this paper, we use $\K$, the field of locally convergent Puiseux series over $\C$, that is, the element of $\K$ are power series of the form \[c(t)=\sum_{\tau\in R}t^{\tau},\] where $R\subset\Q $ is contained in an arithmetic progression bounded {\em from above}, $c_{\tau}\in \C$ and $\sum_{\tau\in R}|c_{\tau}|t^{\tau}<\infty$ for sufficiently large positive $t$. 
This is an
algebraically closed field of characteristic zero
with a non-Archimedean valuation
\[\mr{Val}(c(t)):=\max\{\tau\in R:c_{\tau}\ne 0\}.\]  

\begin{rem}
The definition of $\K$ given in this paper is different from the standard one in literature, which can be obtained by the change of variable $t\mapsto t\inverse$.  We choose this definition not to have the minus sign in the definition of $\mr{Val}(c(t)).$ 
\end{rem}

Now, by definition $\Tr(\mc{X})$ is the closure of the  image of the following map (Refer to \cite{BG, IMS, Kazarnovskii2, MS} for more details about tropicalization):

\[\mc{X}\cap\T_{\De}\rightarrow \Real^{\mc{A}}/\Real\cdot \mf{1}, \quad [ c_a(t)]_{a\in\mc{A}}\mapsto [ a\mapsto \mr{Val}(c_a(t))],\] where we identify a point in $\mc{X}\cap \T_{\De}$ with the curve defined by the polynomial $f=\sum_{a\in\mc{A}} c_a(t) x^a$ up to scalar multiplications and $\Real\cdot\mf{1}$ is the subspace of $\Real^{\mc{A}}$ consisting of constant functions.  For simplicity, denote the image of $[c_a(t)]_{a\in\mc{A}}$ under this map by $\mr{Val}(f)$.

\subsubsection{Tropical plane curves from tropical Severi variety.}
Now we can attach to $\mr{Val}(f)$ a tropical plane curve and the corresponding effective subdivision of $\De$, \[\tau_f:=\tau_{\mr{Val}(f)},\qquad \De_f:=\De_{\mr{cc}(\mr{Val}(f))}\]  (Note that any representative in $\Real^{\mc{A}}$ of $\mr{Val}(f)$ gives rise to a unique tropical plane curve.) 

Shustin found very nice combinatorial as well as geometric results in this process of tropicalization. Also he found a result which describes the inverse process, called the {\em patchworking}.  To summarize his results we need one more data, namely (tropical) degenerations of the curve defined by $f$, which we describe below: First, we write each $c_a(t)  (a\in\mc{A})$ as follows \[c_a(t)=c_a^{\circ} t^{\mr{cc}(\mr{Val}(f))(a)} + l.o.t.,\] where $c_a^{\circ}$ is some complex number which is zero if $\mr{cc}(\mr{Val}(f))(a)> \mr{Val}(f)(a)$, and $l.o.t.$ stands for ``lower order terms''.  For the effective subdivision $\De_f:\De_1\cup\cdots\cup\De_m$, we have the following collection of complex polynomials (equations for the tropical degenerations):
\[ f_i:=\sum_{a\in\De_i\cap\Z^2} c_a^{\circ} x^a,\quad (i=1,\dots, m).\]

Note that the Newton polygon for each $f_i$ (i.e., the convex hull of $a$ such that $c_a^{\circ}\ne 0$ in $f_i$) is equal to $\De_i$. 

\begin{prop} \label{prop:Shustin} ${}$ 
\begin{enumerate}
 \item \cite[\S 3.3]{Shustin2} {\bf (tropicalization)} Suppose that $\mr{rank}(\tau_f)\ge \mr{dim}(\mc{X})$ for $f\in\mc{X}=\Sev$. Then the corresponding effective subdivision $\De_f$ of $\De$ has the following properties: 
 \begin{enumerate}
 \item (simple) Every boundary lattice point of $\De$ is a vertex of some subpolygon $\De_i, (i=1,\dots,m)$.
 \item (nodal) Every $\De_i (i=1,\dots,m)$ is either a  triangle or a parallelogram.
\end{enumerate}

 (In fact, it is known that the rank of any tropical plane curve $\tau_f$ from $f\in\mc{X}$ is at most $\dim(\mc{X})$. Thus the hypothesis of this statement is equivalent to saying that $\tau_f$ has the maximal rank.)
 
 Also, the complex polynomials $f_1,\dots, f_m$ have the following properties ($\star$):
 
 \begin{enumerate}
 \item the Newton polygon of $f_i$ is equal to $\De_i$.
 \item ($\blacktriangle$) if $\De_i$ is a triangle, the curve defined by $f_i$ is rational and meets the union of toric divisors $X_{\partial \De_i}$ at exactly three points, where it is unibranch.
 \item ($\blacksquare$) if $\De_i$ is a parallelogram, the polynomial $f_i$ has the form \[x^ky^l(\alpha x^{a}+\beta y^b)^p(\gamma x^c+\delta y^d)^q\] with $gcd(a,b)=gcd(c,d)=1, (a:b)\ne(c:d), \alpha,\beta,\gamma,\delta\in\mb{C}\setminus\{0\}$. (gcd stands for the greatest common divisor)
 \item for any common edge $\si=\De_i\cap \De_j$ the truncations $f_i^{\si}$ and $f_j^{\si}$ coincide.

\end{enumerate}•
 \item \cite[\S 5]{Shustin2}{\bf (patchworking)}  Let $\De_{\psi}:\De_1\cup\cdots\cup \De_m$ be an effective  simple nodal subdivision of $\De$ with rank equal to $\dim(\mc{X})$.  Suppose we have a collection of complex polynomials  $F=\{f_1,\dots,f_m\}$ which satisfies the conditions ($\star$) above.  Then there exists $f\in\mc{X}$ with $\mr{cc}(\mr{Val}(f))=\psi$

\end{enumerate}

\end{prop}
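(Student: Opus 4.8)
Since the statement combines Shustin's tropicalization and patchworking theorems, proving it amounts to reconstructing the strategy of \cite{Shustin2} in the notation above. Both halves revolve around the \emph{tropical limit} of a one-parameter family of plane curves: for $f=\sum_{a}c_{a}(t)x^{a}\in\mc{X}$, the leading exponents $\mr{cc}(\mr{Val}(f))$ produce the effective subdivision $\De_{f}=\De_{1}\cup\cdots\cup\De_{m}$ and the leading coefficients $c_{a}^{\circ}$ produce the complex polynomials $f_{1},\dots,f_{m}$, with the Newton polygon of $f_{i}$ equal to $\De_{i}$ by construction. The tropicalization direction is then a dimension count, and the patchworking direction is the same construction run in reverse.

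For part (1) the plan is as follows. By definition $\mr{rank}(\tau_{f})=\dim\mc{C}(\De_{f})$, and---after removing the two-dimensional lineality that adding affine functions contributes to every secondary cone---this number records how close $\De_{f}$ is to a fine triangulation: each non-triangular $2$-cell, and each lattice point of $\De$ that is marked but is not a vertex of the subdivision, lowers it. On the other side, each of the $\delta$ nodes of the $\K$-curve $\{f=0\}$ has a well-defined position in the tropical limit: it either specializes into the interior of some cell $\De_{i}$, where it contributes to the $\delta$-invariant of the complex curve $\{f_{i}=0\}$, a quantity bounded above by the number of interior lattice points of $\De_{i}$; or it specializes onto a common edge $\si=\De_{i}\cap\De_{j}$, where it forces the truncations $f_{i}^{\si}$ and $f_{j}^{\si}$ to acquire a common root, i.e.\ a tangency. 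Feeding the hypothesis $\mr{rank}(\tau_{f})\ge\dim\mc{X}=n-1-\delta$---the reverse inequality being part of the statement itself---into this accounting forces every inequality to become an equality simultaneously: every boundary lattice point of $\De$ must be a genuine vertex of some $\De_{i}$ (simplicity); each $\De_{i}$ must be as small as its node count allows, and the only polygons that can carry a \emph{rigid} curve with the maximal number of nodes are triangles and parallelograms (nodality); and the $f_{i}$ must themselves be maximally degenerate, which pins a triangle cell to a rational curve meeting the toric boundary $X_{\partial\De_{i}}$ unibranch at exactly three points and a parallelogram cell to a product of two powers of binomials. The matching of truncations along common edges is immediate from the definition of the tropical limit.

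For part (2) the plan is to build $f$ backwards from this data. Given $\De_{\psi}$ and polynomials $\{f_{i}\}$ satisfying $(\star)$, set $c_{a}(t)=c_{a}^{\circ}t^{\psi(a)}+(\text{higher powers of }t)$, where $c_{a}^{\circ}$ is read off from any $f_{i}$ with $a\in\De_{i}$---well defined because $(\star)$ forces the truncations to agree on overlaps---and then determine the higher-order corrections recursively so that the $\delta$ node-conditions are satisfied. This is a Newton-iteration / implicit-function-theorem argument over $\K$: at each order in $t$ one must solve a linear system whose solvability is the vanishing of a first-order obstruction, and the conditions $(\star)$ are exactly what kills that obstruction---the unibranch tangency of the triangle pieces with the toric boundary is what permits each such piece to be smoothed while keeping its interior-lattice-point worth of nodes, the binomial form of the parallelogram pieces keeps their arrangement of nodes intact, and the edge-matching glues the pieces coherently. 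The same conditions guarantee that exactly $\delta$ nodes---no more and no fewer---survive, so that $f$ lands in $\mathrm{Sev}(\De,\delta)$ rather than a larger or smaller stratum; convergence of the Puiseux series produced by the iteration, so that $f$ is a genuine element of $\mc{X}$ over $\K$, is then a routine estimate.

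The main obstacle is the patchworking direction, and inside it the local analysis at the edges of $\De_{\psi}$: one has to show that along each common edge $\si$ the glued curve acquires exactly the expected nodes---each intersection of the common truncation $f_{i}^{\si}=f_{j}^{\si}$ with the torus contributing a node or a smooth point according to its multiplicity---and that these, together with the nodes internal to the parallelogram cells and those carried by the triangle cells, sum to exactly $\delta$ with no obstruction to the deformation. This is an $h^{1}$-vanishing statement on a reducible, highly non-reduced degeneration, and it is where essentially all of the real work of \cite{Shustin2} sits; everything else is bookkeeping organized around it.
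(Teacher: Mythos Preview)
The paper does not prove this proposition at all: it is quoted verbatim from Shustin's work, with explicit citations to \cite[\S 3.3]{Shustin2} for part (1) and \cite[\S 5]{Shustin2} for part (2), and no argument is supplied in the paper itself. There is therefore no ``paper's own proof'' to compare your proposal against.

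That said, your sketch is a fair high-level summary of Shustin's strategy, and you are candid that it is only a plan: you correctly flag that the substance lies in the $h^{1}$-vanishing/obstruction analysis on the reducible tropical limit, and you do not pretend to carry that out. As a reconstruction of the architecture of \cite{Shustin2} this is reasonable; as a self-contained proof it is not one, and you say as much. For the purposes of this paper, the appropriate treatment is exactly what the author does---state the result with attribution and move on---so no proof is expected here.
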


\subsection{Tropical intersection and weighted counts of tropical plane curves} \label{sec:intersection} As the last topic of this section, we summarize   some results presented in \cite{JYang} which will be used for the last theorem in the next section. In the tropical intersection theory \cite{AR, Katz2, Kazarnovskii2, JYang}, there is an important intersection multiplicity (which was called an {\em extrinsic intersection multiplicity} in \cite{JYang}). Let us recall it. Suppose  $\psi$ is a transversal intersection  point of two tropical varieties
 $\mc{T}_1, \mc{T}_2$ of complementary
dimension  and so $\mc{T}_i$ is equal to
$L_i$ locally near $\psi, (i=1,2)$, where $L_1$ and
$L_2$ are affine spaces of complementary
dimensions.   
The (extrinsic) intersection
multiplicity of  $\mc{T}_1$ and $\mc{T}_2$ at
$\psi$, denoted by $\xi(\psi;\mc{T}_1,\mc{T}_2)$, is
the volume of the parallelepiped constructed by
the fundamental cells of the lattices
$\mb{L}_i\cap\Z^n,(i=1,2)$ (``principal
parallelepiped''). When  $\mc{T}_1, \mc{T}_2$ are the tropicalizations of complementary dimensional tori, then this multiplicity is in fact equal to the number of intersection points of the tori. 

 In this section, we consider the intersection of the tropical Severi variety $\Tr(\mc{X})=\Tr(\Sev)$ with a complementary dimensional tropical linear space $\Tr(\mc{L}(\bf{p}))$ coming from point conditions. The intersection points of these two spaces correspond to  tropical plane curves passing through the given points counted with certain multiplicities. (For details, see \cite{JYang}.)

\begin{defn}\cite[Definition 2.41]{IMS}\label{defn:GeneralPosition}$ $\begin{enumerate}
\item Let $S$ be an effective  subdivision of $\De$.
We say that the distinct points
$x_1,\dots,x_{\zeta}\in\Q^2$ are in
{\em $S$-general position}, if the condition for
tropical curves with degree $\De$ to pass through
$x_1,\dots,x_{\zeta}$ (``base-point-condition'')
cuts out the  cone $\mc{C}(S)$ either
the empty set, or a polyhedron of codimension
$\zeta$.
\item We say that the distinct points $x_1,\dots,x_{\zeta}$ are in $\De$-{\em general
position (or simply, generic points)}, if they are $S$-general for all effective
subdivisions $S$ of $\De$.
\end{enumerate}
\end{defn}
\begin{lem}\cite[Lemma 2.42]{IMS} For any given convex lattice polygon $\De$, the set of
$\De$-general configurations $x_1,\dots,x_{\zeta}$
is dense in $(\Q^2)^{\zeta}$.
\end{lem}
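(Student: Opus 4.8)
The plan is to deduce this statement from the fact that for a fixed effective subdivision $S$, the set of configurations $(x_1,\dots,x_\zeta)$ that are \emph{not} $S$-general is contained in a proper algebraic (indeed, real-algebraic) subvariety of $(\Q^2)^\zeta$, hence has empty interior; then intersect over the finitely many effective subdivisions of $\De$ and invoke the Baire-type observation that a finite intersection of dense open sets is dense. Concretely, I would first unwind Definition~\ref{defn:GeneralPosition}: passing through a point $x_j$ imposes on $\psi \in \mathcal{C}(S)$ the condition that the maximum $\max_{a\in\mc{A}}\{a\cdot x_j + \psi(a)\}$ is attained at least twice, i.e. $x_j$ lies on the tropical curve $\tau_\psi$. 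Inside the relative interior $\mathcal{C}(S)^\circ$, the combinatorial type of $\tau_\psi$ is constant, so each such condition is given by the vanishing of an affine-linear functional in $\psi$ whose coefficients depend only on which pair of lattice points of $\mc{A}$ realizes the maximum — and these coefficients, as $x_j$ ranges over a chamber, are \emph{locally constant in} $x_j$ except along the (lower-dimensional) walls where $x_j$ meets a vertex of $\tau_\psi$. The content of $S$-generality is then exactly that these $\zeta$ affine functionals cut $\mathcal{C}(S)$ in the expected codimension, which fails only on a closed set of $(x_1,\dots,x_\zeta)$ described by polynomial (in fact determinantal) equations.

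Next I would make the "proper subvariety" claim precise. For each effective subdivision $S$, each choice of edges $e_1,\dots,e_\zeta$ of the dual complex of $S$ (equivalently, each combinatorial way the $\zeta$ point-conditions could be realized), the locus of configurations for which the corresponding linear system on $\mathcal{C}(S)$ drops rank is cut out by the vanishing of certain minors of a matrix whose entries are affine-linear in the coordinates of the $x_j$. As long as there exists \emph{some} configuration for which the rank is maximal — and this is easy to exhibit by choosing the $x_j$ generically, placing them one at a time on distinct edges of a generic tropical curve dual to $S$ — the determinantal locus is a proper subvariety, hence closed with empty interior in $(\Q^2)^\zeta$. Taking the union over the finitely many pairs $(S,\ (e_1,\dots,e_\zeta))$ still gives a closed set with empty interior; its complement is the set of $\De$-general configurations, which is therefore open and dense. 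Since $\Q^2$ is dense in $\Real^2$ and the conditions are defined over $\Q$, density in $(\Q^2)^\zeta$ follows.

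The main obstacle — and the step deserving the most care — is the claim that for each effective subdivision $S$ there is at least one configuration realizing the maximal rank, i.e. that the set of $S$-general configurations is genuinely nonempty rather than vacuously so. This requires an explicit construction: fix $\psi_0 \in \mathcal{C}(S)^\circ$, so $\tau_{\psi_0}$ is dual to $S$; then choose $x_1$ in the relative interior of one edge of $\tau_{\psi_0}$, which imposes one independent linear condition on $\psi$ and restricts to a facet; iterate, each time picking $x_{k+1}$ on an edge of the (perturbed) tropical curve whose dual direction in $\mc{A}$-coordinates is not in the span of the previously chosen ones, using that $S$ has enough edges (it is a subdivision of a full-dimensional polygon) to supply $\zeta$ independent directions when $\zeta \le \dim \mathcal{C}(S)$. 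Handling the bookkeeping of "independent directions" — and checking that one can always stay in the relative interior of $\mathcal{C}(S)$ while doing so — is where the real work lies; everything after that is the soft topological argument above. Since this is essentially the content of \cite[Lemma 2.42]{IMS}, for the purposes of this paper I would simply cite it, but the sketch above indicates how one would reconstruct it if needed.
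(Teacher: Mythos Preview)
The paper does not prove this lemma at all; it is quoted verbatim from \cite[Lemma 2.42]{IMS} and used as a black box, exactly as you yourself conclude in your final sentence. So there is no ``paper's own proof'' to compare against, and your instinct to simply cite it is what the author in fact does.

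Your reconstruction is in the right spirit --- finitely many effective subdivisions, for each one the non-$S$-general locus is nowhere dense, then take the finite intersection --- and this is indeed how the argument in \cite{IMS} runs. One point in your sketch is slightly off, though not fatally: you frame the bad locus as a determinantal subvariety cut out by polynomial equations in the coordinates of the $x_j$. But as you also note, the affine functionals on $\psi$ imposed by the point conditions have the form $\psi\mapsto\psi(a)-\psi(a')$ where $a,a'$ are the endpoints of an edge of $S$; their homogeneous parts depend only on which edge $x_j$ lands on, not on the coordinates of $x_j$ itself (cf.\ equation~\eqref{independent} in the paper). So the rank of the system is locally constant in the $x_j$, and the bad locus is a finite union of \emph{polyhedral} pieces (where some $x_j$ hits a vertex or a wall, or the edge assignment forces a dependency) rather than a genuine algebraic subvariety. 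The density conclusion is the same, but the mechanism is piecewise-linear, not algebraic. Your inductive construction of a good configuration --- placing points one at a time on independent edges of $\tau_{\psi_0}$ --- is the correct way to establish nonemptiness.
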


\begin{defn}\label{defn:linear space}
 Let $\bs{p}=\{p_1,\dots, p_{\zeta}\}\subset \TK^2$ be a finite set of points in $\TK^2$.
 Define $\mc{L}(\bs{p})\subset\Pro_{\De}$ to  be the parameter space of algebraic curves on
 the toric surface $X_{\De}$ passing through all the points in $\bs{p}$. (Remember that the base field is $\mb{K}$.) This parameter space $\mc{L}(\bs{p})$ is
 the complete intersection of hyperplanes $\mc{H}_{p_j}\subset\Pro_{\De}$ defined by the condition of
  passing through the point $p_j,(j=1,\dots, \zeta).$ 
\end{defn}

Now for  $r=\dim(\mc{X})$ let
  $\bs{p}=\{p_1,\dots,p_r\}\subset\TK^2$ be a configuration of  $r$ generic
   points in
  $\TK^2$ so that
$\bs{Val(p)}:=\{Val(p_1),\dots,Val(p_r)\}\subset\Q^2$
 is in $\De$-general position and
  $\mathrm{Trop}(\mc{L}(\bs{p}))\cap\Tr(\mc{X})$
 is a transversal intersection. (For $p_i=(p_{i1}, p_{i2}), Val(p_i):=(Val(p_{i1}), Val(p_{i2})$)  Then for any intersection point $\psi\in\mathrm{Trop}(\mc{L}(\bs{p}))\cap\Tr(\mc{X})$,
 the tropical plane curve $\tau_{\psi}$  passes through all the points in $\bs{Val(p)}$.
Since $\mr{rank}(\psi)=r$,  these points lie  on $r$ distinct edges of
$\tau_{\psi}$ which correspond to some $r$ edges of
the subdivision $\De_{\psi}$. If $\si_i\in
\mr{Edges}(\De_{\psi})$ correspond to a point $Val(p_i)$
and $a_i, a_i'$ are the endpoints of
$\si_i,\quad 1\le i\le r$, then we have the
following linear conditions on $\psi(a_i)$ and
$\psi(a_i')$:
\[\label{independent}\psi(a_i)-\psi(a_i')=(a_i'-a_i)\cdot Val(p_i), \quad(i=1,\dots,r.)\]

We also suppose that  this linear system $\ref{independent}$ is independent.

\begin{prop}\cite[Theorem 4.14]{JYang} \label{prop:Severi degree} With the assumptions as given above,
we can compute the multiplicity as follows:
\[\xi(\psi;\mathrm{Trop}(\mc{L}(\bs{p})),\Tr(\mc{X}))=\frac{\prod
2\mr{area}(\mr{Triangles})}{l(\mb{V}_{S_{\psi}})\cdot
\widetilde{\prod}\mr{length(\mr{Edges})}}, \] where
\begin{enumerate}\item $\prod
2\mr{area}(\mr{Triangles})$ is  the product of  twice the (Euclidean) area of each triangle in
 $\De_{\psi}$.
\item $\widetilde{\prod}\mr{length(\mr{Edges})}$ is the product of the
lattice lengths of the  edges  which are
representatives of each equivalence class in
$\mr{Edges}(\De_{\psi})$, where we define an equivalence
relation as follows: let $e\sim e'$ if $e$ and
$e'$ are the parallel edges of a parallelogram in
$\De_{\psi}$ and extend it by transitivity.

\item $l(\mb{V}_{\De_{\psi}})$ is the number of components of the algebraic set $\mb{V}_{\De_{\psi}}$ defined below.
\end{enumerate}

\end{prop}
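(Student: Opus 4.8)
The plan is to identify, near the transversal intersection point $\psi$, the two local affine subspaces of $\Real^{\mc A}/\Real\cdot\mf 1$ together with their lattices, and then to compute the covolume of the lattice they jointly span; the asserted ratio will appear once one works in coordinates adapted to the subdivision $\De_\psi$. Since $\mr{rank}(\psi)=r=\dim\mc X$, Proposition~\ref{prop:Shustin}(1) shows that $\De_\psi:\De_1\cup\dots\cup\De_m$ is an effective simple nodal subdivision, so each cell is a triangle or a parallelogram and each boundary lattice point of $\De$ is a vertex of some $\De_i$. On the linear-space side, near $\psi$ the tropical variety $\mathrm{Trop}(\mc L(\bs p))$ is the affine subspace cut out by the $r$ point-conditions \eqref{independent}, whose direction space is $\mb L_1=\{\phi\in\Real^{\mc A}:\phi(a_i)=\phi(a_i'),\ i=1,\dots,r\}$, with a saturated lattice $\mb L_1\cap\Z^{\mc A}$ and an obvious basis. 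On the Severi side, since $\psi$ lies in the relative interior of $\mc C(\De_\psi)$ and the combinatorial type $\De_{\mr{cc}(\mr{Val}(f))}$ of a point of $\Tr(\mc X)$ is locally constant, $\Tr(\mc X)$ coincides with the secondary cone $\mc C(\De_\psi)$ near $\psi$ (both have dimension $r$); its linear span $\mb L_2$ is the space of functions on $\mc A$ that restrict to an affine-linear function on each $\De_i$. By the definition of $\xi$, what remains is to compute the covolume in $\Z^{\mc A}$ (taken modulo $\Z\cdot\mf 1$) of $(\mb L_1\cap\Z^{\mc A})\oplus\Lambda_2$, where $\Lambda_2$ is the lattice that $\Tr(\mc X)$ carries along $\mc C(\De_\psi)$; the one genuinely arithmetic input is that $\Lambda_2$ is not the naive combinatorial lattice $\mb L_2\cap\Z^{\mc A}$ but differs from it by the factor $l(\mb V_{\De_\psi})$.

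Next I would pass to coordinates on $\mb L_2$ adapted to $\De_\psi$. Dually, $\phi\in\mb L_2$ is the tropical plane curve with degree $\De$ dual to $\De_\psi$, and its free parameters are the positions in $\Real^2$ of the vertices dual to the cells $\De_i$ --- equivalently the gradients of $\phi|_{\De_i}$ --- together with one overall constant; the conditions \eqref{independent} then fix the offsets of $\phi$ along the $r$ edges $\sigma_i$ carrying the marked points. Writing the change of basis between the ambient coordinates $\phi\mapsto(\phi(a))_{a\in\mc A}$ and these adapted coordinates as a square matrix, one computes its determinant cell by cell along the dual graph of $\De_\psi$ (the parallelograms producing the only loops). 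A triangular cell $\De_i$ contributes the index, inside the lattice of affine functions on $\De_i$ integral on $\mr{Vert}(\De_i)$, of the sublattice of those integral on all of $\De_i\cap\Z^2$; this index equals $2\,\mr{area}(\De_i)$, which accounts for the numerator $\prod 2\,\mr{area}(\mr{Triangles})$. A parallelogram cell contributes no area factor: by Proposition~\ref{prop:Shustin}(1), case~($\blacksquare$), the corresponding polynomial $f_i=x^ky^l(\alpha x^a+\beta y^b)^p(\gamma x^c+\delta y^d)^q$ involves only the two lattice directions of lengths equal to its two classes of parallel edges, which both forces the equivalence $\sim$ on $\mr{Edges}(\De_\psi)$ and produces the factor $\widetilde{\prod}\mr{length}(\mr{Edges})$ in the denominator.

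Finally I would extract the factor $l(\mb V_{\De_\psi})$ using the patchworking theorem, Proposition~\ref{prop:Shustin}(2): over the relative interior of $\mc C(\De_\psi)$ the fibres of $\mc X\cap\T_\De\TTo\Tr(\mc X)$ are governed by the algebraic set $\mb V_{\De_\psi}$ of admissible tuples $(f_1,\dots,f_m)$ satisfying $(\star)$, each of whose $l(\mb V_{\De_\psi})$ irreducible components dominates the cone generically finitely; comparing the lattice $\Lambda_2$ that $\Tr(\mc X)$ therefore carries with $\mb L_2\cap\Z^{\mc A}$ introduces exactly this number, and assembling the three steps gives
\[\xi(\psi;\mathrm{Trop}(\mc L(\bs p)),\Tr(\mc X))=\frac{\prod 2\,\mr{area}(\mr{Triangles})}{l(\mb V_{\De_\psi})\cdot\widetilde{\prod}\mr{length}(\mr{Edges})}.\]
The hard part will be this last step, together with the parallelogram analysis: separating the contribution of the honest algebraic Severi variety from the purely combinatorial secondary cone --- showing that the split form of $f_i$ replaces parallelogram areas by edge lengths, and that the discrepancy between $\Lambda_2$ and $\mb L_2\cap\Z^{\mc A}$ is exactly $l(\mb V_{\De_\psi})$ --- requires the full strength of Shustin's results and a careful analysis of how the matching conditions in $(\star)$ along common edges cut down, and possibly disconnect, the parameter space $\mb V_{\De_\psi}$. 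By comparison the cell-by-cell determinant of the previous paragraph is routine once the dual combinatorial model of $\mb L_2$ is in place.
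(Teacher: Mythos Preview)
The paper does not prove this proposition at all: it is quoted verbatim from \cite[Theorem 4.14]{JYang} as background material in \S\ref{sec:intersection}, where the author writes that she is ``summariz[ing] some results presented in \cite{JYang}''. There is therefore no proof in the present paper against which your attempt can be compared.

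As for your sketch on its own merits: the overall architecture --- identify the two local lattices $\mb L_1\cap\Z^{\mc A}$ and $\Lambda_2$, compute the covolume cell by cell over the dual graph of $\De_\psi$, and separate the combinatorial secondary-cone contribution from the genuinely algebraic factor $l(\mb V_{\De_\psi})$ --- is the right shape and is indeed close to what is done in \cite{JYang}. One point to be careful with: your claim that ``$\Tr(\mc X)$ coincides with the secondary cone $\mc C(\De_\psi)$ near $\psi$'' is precisely the kind of statement this paper shows can fail (Theorem~\ref{thm:1} and Katz's example); what is true and sufficient here is only that the \emph{linear span} $\mb L_2$ of $\Tr(\mc X)$ near $\psi$ agrees with that of $\mc C(\De_\psi)$, which is all you actually use. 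You correctly flag the identification of the tropical weight on $\Lambda_2$ with $l(\mb V_{\De_\psi})$ as the hard step; that is where the real work in \cite{JYang} lies, and your sketch does not supply it.
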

\begin{defn}
 Suppose that
$S:=\De_1\cup\cdots\cup \De_m$ is a nodal effective subdivision of $\De$
that is, every sub-polygon is either a triangle
or a parallelogram. Define $\mb{V}_{S}$ to be the set of all
  $f\in\Pro_{\De}$  with the following properties,
\begin{itemize}
 \item ($\blacktriangle$) For every triangle $\De_i$, the curve defined by $f_{\De_i}$ (the truncation of $f$ along $\De_i$) is rational and meets the union of toric divisors $X_{\partial \De_i}$ at exactly three points, where it is unibranch.;
 \item ($\blacksquare$) For every parallelogram $\De_j$,   the polynomial $f_{\De_j}$ has the form \[x^ky^l(\alpha x^{a}+\beta y^b)^p(\gamma x^c+\delta y^d)^q\] with $gcd(a,b)=gcd(c,d)=1, (a:b)\ne(c:d), \alpha,\beta,\gamma,\delta\in\mb{C}\setminus\{0\}$.
\end{itemize}
\end{defn}

The set $\mb{V}_{S}$ is an algebraic set in a certain torus and its number of components can be computed easily using linear algebra. More details can be found in \cite[\S 3]{JYang}.

\section{Main results}\label{sec:proofs} In this last section, we state and prove the main results of this paper, that is, answers to the question: {\bf how are secondary fans and tropical Severi varieties related?}
We use the notations defined in the previous sections. 

The first theorem provides a simple sufficient condition under which the tropical Severi variety, $\Tr(\mc{X})$, cannot be a subfan of the Secondary fan $\Si(\De)$. That is, we cannot find a fan structure on $\Tr(\mc{X})$ such that each cone of $\Tr(\mc{X})$ is the union of some cones of $\Si(\De)$. 

\begin{thm}\label{thm:1}
Suppose that there exists a non-effective  $\psi\in \Tr(\mc{X})$ with the maximal rank,  $\rank(\psi)=\dim(\mc{X})$. Then there is no fan structure on $\Tr(\mc{X})$ which makes $\Tr(\mc{X})$ to be a subfan of the secondary fan $\Si(\De)$. 
\end{thm}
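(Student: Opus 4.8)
The plan is to argue by contradiction: suppose $\Tr(\mc{X})$ admits a fan structure making it a subfan of $\Si(\De)$, and derive a contradiction from the existence of the non-effective maximal-rank point $\psi$. The key geometric fact I want to exploit is the dichotomy between effective and non-effective elements of $\Real^{\mc{A}}/\Real\cdot\mf{1}$: effectivity of $\psi$ is equivalent to saying that every lattice point in every sub-polygon of $\De_{\mr{cc}(\psi)}$ is marked, i.e. $\mc{A}_i=\De_i\cap\Z^2$ for all $i$. So $\psi$ non-effective means that $\De_\psi$ (the subdivision actually induced by $\psi$, keeping track of markings) differs from $\De_{\mr{cc}(\psi)}$ in that at least one lattice point of some $\De_i$ fails to be marked. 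On the other hand, $\rank(\psi)=\dim(\mc{X})$ means $\psi$ lies in a cone $\mc{C}(\De_{\mr{cc}(\psi)})$ of the secondary fan whose dimension is exactly $\dim(\mc{X})$, i.e. the same dimension as (the cones of) $\Tr(\mc{X})$ near its top-dimensional locus.

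First I would recall that, since $\psi\in\Tr(\mc{X})$ has $\rank(\psi)=\dim(\mc{X})$, Proposition~\ref{prop:Shustin}(1) applies: the effective subdivision $\De_f=\De_{\mr{cc}(\mr{Val}(f))}$ is simple and nodal, and the associated complex polynomials $f_1,\dots,f_m$ satisfy the conditions $(\star)$. Next, I would look at the cone $C:=\mc{C}(\De_{\mr{cc}(\psi)})$ in $\Si(\De)$: it has dimension $\dim(\mc{X})$, and by the patchworking direction of Proposition~\ref{prop:Shustin}(2), since we can realize the collection $\{f_1,\dots,f_m\}$ coming from $f$, the relative interior $C^\circ$ — which consists exactly of those $\phi\in\Real^{\mc{A}}$ with $\De_\phi=\De_{\mr{cc}(\psi)}$, i.e. the \emph{effective} $\phi$ inducing that subdivision — meets $\Tr(\mc{X})$. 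Combined with the fact that $\Tr(\mc{X})$ is closed and $C=\overline{C^\circ}$, I expect to conclude $C\subseteq\Tr(\mc{X})$, or at least that a full-dimensional sub-cone of $C$ lies in $\Tr(\mc{X})$.

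Now comes the crux. If $\Tr(\mc{X})$ were a subfan of $\Si(\De)$, then the point $\psi$, which lies in $\Tr(\mc{X})$, would lie in the relative interior of some cone $C'$ of $\Si(\De)$ with $C'\subseteq\Tr(\mc{X})$. But $\psi$ is non-effective, so the cone of $\Si(\De)$ containing $\psi$ in its relative interior is $\mc{C}(\De_\psi)$ where $\De_\psi\ne\De_{\mr{cc}(\psi)}$ (it has fewer marked points), and this cone is a \emph{proper face} of $C=\mc{C}(\De_{\mr{cc}(\psi)})$ — forgetting markings is exactly passing to a face in the secondary fan — hence $\dim\mc{C}(\De_\psi)<\dim C=\dim(\mc{X})$. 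So $C'=\mc{C}(\De_\psi)$ would be a cone of $\Tr(\mc{X})$ of dimension strictly less than $\dim(\mc{X})=\dim\Tr(\mc{X})$; yet since $\Tr(\mc{X})$ is of pure dimension $\dim(\mc{X})$ and $C'$ is supposed to be one of its cones in the subfan structure, $C'$ must be contained in the closure of the union of top-dimensional cones, forcing $\psi$ to also lie in some top-dimensional cone $C''$ of the subfan, with $C''\subseteq\Tr(\mc{X})$ and $C''$ a cone of $\Si(\De)$ of dimension $\dim(\mc{X})$ having $\mc{C}(\De_\psi)$ as a face. The contradiction I am after is then: such a $C''$ must be $\mc{C}(S)$ for an effective subdivision $S$ refined by $\De_\psi$ with $\dim\mc{C}(S)=\dim(\mc{X})$; but the only effective subdivision sitting over $\De_\psi$ in the right way is $\De_{\mr{cc}(\psi)}$ itself, and $\psi\notin\mc{C}(\De_{\mr{cc}(\psi)})^\circ$ since $\psi$ is non-effective — more precisely $\psi$ lies on the boundary of $C$ but not in $\Tr(\mc{X})\cap C''$ for any admissible $C''$, contradicting $\psi\in\Tr(\mc{X})$.

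The main obstacle, and the step needing the most care, is the last one: pinning down exactly which top-dimensional cones of $\Si(\De)$ can be contained in $\Tr(\mc{X})$ and showing that none of them can have $\mc{C}(\De_\psi)$ as a face while still avoiding the non-effective point $\psi$ on that face — equivalently, showing that membership of $\psi$ in $\Tr(\mc{X})$ together with the subfan hypothesis would \emph{force} $\psi$ to be effective. I would handle this by using the characterization (from the Remark after the Definitions, and from \cite{MMS}) that effective $=$ maximal-dimensional type, together with the observation that a subfan structure would make $\Tr(\mc{X})$ locally near $\psi$ a union of secondary cones all containing $\psi$, whose union is $\dim(\mc{X})$-dimensional, and each such cone corresponds to a subdivision refined by $\De_\psi$; the only such subdivisions with a $\dim(\mc{X})$-dimensional secondary cone are the effective refinements, and $\psi$ lies in the relative interior of the common face $\mc{C}(\De_\psi)$ of all of them, which has lower dimension — contradiction with $\psi\in\Tr(\mc{X})$ and purity. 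I would also want to double-check the purity claim: that $\Tr(\mc{X})$, being the tropicalization of the irreducible variety $\Sev$ of dimension $n-1-\de$, is pure of dimension $\dim(\mc{X})$, which is standard (e.g. via \cite{BG, MS}).
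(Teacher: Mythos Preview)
Your proposal contains a crucial error in the face relation between the two secondary cones. You assert that $\mc{C}(\De_\psi)$ is a proper face of $C=\mc{C}(\De_{\mr{cc}(\psi)})$, hence of strictly smaller dimension. This is backwards. In the secondary fan, \emph{unmarking} a lattice point corresponds to passing to a \emph{larger} cone, not a smaller one: by definition $\mc{C}(S)$ consists of all $\phi$ with $S$ refining $\De_\phi$, and in the GKZ poset removing a marked point is a refinement (triangulations, which carry the fewest marked points, are the minimal elements and index the maximal cones). Concretely, $\De_\psi$ refines $\De_{\mr{cc}(\psi)}$, so the effective cone $\mc{C}(\De_{\mr{cc}(\psi)})$ is a proper face of $\mc{C}(\De_\psi)$, and therefore
\[
\dim\mc{C}(\De_\psi)\;>\;\dim\mc{C}(\De_{\mr{cc}(\psi)})\;=\;\rank(\psi)\;=\;\dim(\mc{X}).
\]

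With the inequality pointing the correct way, the proof is immediate and much shorter than what you attempt: $\psi$ lies in the relative interior of the secondary cone $\mc{C}(\De_\psi)$, so any subfan of $\Si(\De)$ whose support contains $\psi$ must contain the entire cone $\mc{C}(\De_\psi)$; but that cone has dimension strictly larger than $\dim\Tr(\mc{X})$, which is impossible. This is exactly the paper's argument. Your detour through Proposition~\ref{prop:Shustin} (tropicalization and patchworking) and through the content of Theorem~\ref{thm:2} is unnecessary here; those tools belong to the proof of the second theorem, not this one. The final paragraph of your proposal, where you try to place $\psi$ on a low-dimensional face and then invoke purity to reach a contradiction, is an attempt to rescue an argument built on the wrong inequality; even on its own terms it does not close, since a point of $\Tr(\mc{X})$ lying in the relative interior of a lower-dimensional secondary cone is not by itself contradictory.
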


\begin{proof} We consider the following two cones in the secondary fan $\Si(\De)$: 

\begin{itemize}
\item the cone of the subdivision induced by $\psi$, $\mc{C}(\De_{\psi})$;
\item the effective cone of the subdivision induced by $\mr{cc}(\psi)$ (the concave hull of $\psi$), $\mc{C}(\De_{\mr{cc}(\psi)}).$ 
\end{itemize}

Since $\psi$ is non-effective,  $\psi$ is contained in the relative interior of the cone $\mc{C}(\De_{\psi})$ while the cone $\mc{C}(\De_{\mr{cc}(\psi)})$ lies on the boundary of the cone $\mc{C}(\De_{\psi})$.  
By the definition of rank, we know that the dimension of $\mc{C}(\De_{\mr{cc}(\psi)})$ is equal to $r=\dim(\mc{X})$. Thus $\psi$ lies in the relative interior of the cone $\mc{C}(\De_{\psi})$ whose dimension is strictly larger than $r$.  It is known that any fan structure on $\Tr(\mc{X})$ is of dimension $r$. Thus no fan structure on $\Tr(\mc{X})$ can be a subfan of $\Si(\De)$.

\end{proof}

\begin{rem}\label{rem:determinant} Let us consider the case of $\de=1$ for $\mc{X}=\Sev$. As we mentioned before, in this case $\mc{X}$ is the hypersurface of $\Pro_{\De}\cong\Pro^{n-1}$ defined by a polynomial  called {\em $\mc{A}$-discriminant}, where $\mc{A}=\De\cap\Z^2.$ Thus, $\Tr(\mc{X})$ is the codimension one skeleton of  the normal fan $\mcF$ of the Newton polytope of $\mc{A}$-discriminant.
 In \cite{GKZ} it is shown that $\mc{A}$-discriminant is a divisor of another polynomial called {\em the principal $\mc{A}$-determinant} and the normal fan of the Newton polytope of the principal $\mc{A}$-determinant is equal to the secondary fan $\Si(\De)$. Therefore $\mcF$  is a subfan of  the secondary fan $\Si(\De)$. As the codimension one skeleton of $\mcF$, $\Tr(\mc{X})$ is also a subfan of the secondary fan $\Si(\De)$. From the theorem above, we can induce that  there  is no non-effective $\psi$ in $\Tr(\mc{X})$ with the maximal rank. Indeed, this fact is easily seen as follows: Suppose $\psi\in\Tr(\mc{X})$ has the maximal rank, $\rank({\psi}):=\dim(C(\De_{\mr{cc}(\psi)}))=\dim(\mc{X})=n-2$. However $\dim(C(\De_{\mr{cc}(\psi)}))\le\dim(C(\De_{\psi}))\le n-2$. Therefore $C(\De_{\mr{cc}(\psi)})=C(\De_{\psi})$ and so $\psi$ is effective. 
\end{rem}

The following theorem provides another relation between $\Tr(\mc{X})$ and $\Si(\De)$. It is in an opposite direction of the previous theorem in a sense.

\begin{thm}\label{thm:2}
Suppose that there exists an effective cone $\mc{C}$ in $\Si(\De)$ such that $\dim(\mc{C})=\dim(\mc{X})$ and its relative interior $\mc{C}^{\circ}$ intersects with $\Tr(\mc{X})$ at non-zero vectors. Then $\mc{C}^{\circ}$ is fully contained in $\Tr(\mc{X}).$
\end{thm}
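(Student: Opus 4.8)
The plan is to exploit the fact that every function in the relative interior $\mc{C}^\circ$ induces one and the same effective subdivision of $\De$, so that a single family of complex polynomials produced by Shustin's tropicalization theorem can be fed into his patchworking theorem at \emph{every} point of $\mc{C}^\circ$. Since $\Tr(\mc{X})$ is closed, it will be enough to realize the (dense) rational points of $\mc{C}^\circ$.

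First I would choose a rational point $\psi\in\mc{C}^\circ\cap\Tr(\mc{X})$; one exists because $\Tr(\mc{X})$ and $\mc{C}$ are rational polyhedral sets and $\mc{C}^\circ\cap\Tr(\mc{X})$ is non-empty by hypothesis, rational points being dense in a rational polyhedron and in the relative interior thereof. As $\psi$ is rational, $\psi=\mr{Val}(f)$ for some $f\in\mc{X}\cap\T_{\De}$. Since $\mc{C}$ is an effective cone, $\mc{C}=\mc{C}(S)$ for a unique effective subdivision $S$ of $\De$, and by the description of $\mc{C}(S)^\circ$ recalled in Section~\ref{sec:preliminaries} the containment $\psi\in\mc{C}(S)^\circ$ forces $\De_\psi=S$; in particular $\psi$ is effective, $\De_{\mr{cc}(\psi)}=\De_\psi=S$, and hence
\[\rank(\psi)=\dim\mc{C}(\De_{\mr{cc}(\psi)})=\dim\mc{C}(S)=\dim\mc{C}=\dim(\mc{X}).\]
So $\tau_f$ has maximal rank, and Proposition~\ref{prop:Shustin}(1) applies: $S$ is simple and nodal, and the associated complex polynomials $F=\{f_1,\dots,f_m\}$, where $f_i$ has Newton polygon $\De_i$, satisfy the conditions $(\star)$.

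Next, given an arbitrary rational $\psi'\in\mc{C}^\circ$, the same reasoning gives $\De_{\psi'}=S$, so $\psi'$ is effective, $S$ is simple and nodal, and $\rank(\psi')=\dim\mc{C}(S)=\dim(\mc{X})$. The crucial point is that the conditions $(\star)$ on a family of complex polynomials refer only to the combinatorial data of the subdivision $S$---which $\De_i$ are triangles, which are parallelograms, and which pairs share an edge---and not to the values of any function inducing $S$; therefore the \emph{same} family $F$ is admissible patchworking data for $\De_{\psi'}$. Applying Proposition~\ref{prop:Shustin}(2) to $(\psi',F)$ yields $f'\in\mc{X}=\Sev$ with $\mr{cc}(\mr{Val}(f'))=\psi'$. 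Because $\psi'$ is effective, every lattice point of $\De$ is marked in $\De_{\psi'}$, which upgrades this to $\mr{Val}(f')=\psi'$ in $\Real^{\mc{A}}/\Real\cdot\mf{1}$; hence $\psi'=\mr{Val}(f')\in\Tr(\mc{X})$. Thus $\Tr(\mc{X})$ contains every rational point of $\mc{C}^\circ$, and being closed it contains the closure of this set, namely all of $\mc{C}^\circ$.

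The step I expect to be the main obstacle is the transport of the data, which must be argued carefully even though it is not conceptually deep: one has to confirm that $(\star)$ is genuinely a property of $S$ alone (so the family $F$ extracted at one point of $\mc{C}^\circ\cap\Tr(\mc{X})$ is legitimate input at every other point), and one has to pin down why the effectiveness of $\psi'$ is exactly what converts the conclusion $\mr{cc}(\mr{Val}(f'))=\psi'$ of patchworking into the equality $\mr{Val}(f')=\psi'$, placing $\psi'$ in the image of the tropicalization map and not merely in its closure. A further routine check is that the hypotheses of Proposition~\ref{prop:Shustin}---simple, nodal, maximal rank---hold at every point of $\mc{C}^\circ$, which reduces to the constancy of $\De_{\psi'}$ on the relative interior.
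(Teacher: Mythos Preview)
Your approach is essentially the paper's: pick a rational $\psi\in\mc{C}^\circ\cap\Tr(\mc{X})$, extract the complex limit data via Proposition~\ref{prop:Shustin}(1), then feed that same data into Proposition~\ref{prop:Shustin}(2) at any other rational point $\psi'\in\mc{C}^\circ$; density of rationals plus closedness of $\Tr(\mc{X})$ finishes. The paper phrases the conclusion via the initial scheme ($\n{\psi'}\mc{X}\neq\emptyset$) rather than the image of $\mr{Val}$, but the substance is identical.

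One small point to tighten: your ``upgrade'' from $\mr{cc}(\mr{Val}(f'))=\psi'$ to $\mr{Val}(f')=\psi'$ is not a consequence of $\psi'$ being effective (e.g.\ $\mr{Val}(f')=(0,-1,0)$ has concave hull $(0,0,0)$, which is effective, yet $\mr{Val}(f')\neq(0,0,0)$). The correct reason, and the one the paper uses, is that patchworking actually returns $f'$ with \emph{prescribed} leading coefficients $c_a'(t)=\bar c_a\,t^{\psi'(a)}+\text{l.o.t.}$; since all $\bar c_a\in\C^*$ (which follows from the effectiveness of the \emph{source} point $\psi$), one gets $\mr{Val}(f')=\psi'$ directly. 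This is a strengthening of the conclusion stated in Proposition~\ref{prop:Shustin}(2), but it is what Shustin's theorem gives and what the paper invokes.
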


\begin{proof}

   Let
$\eta\in\mc{C}^{\circ}\cap\Q^{\mc{A}}$. It is
enough to show that $\n\eta \mc{X}$ (the initial scheme of $\mc{X}$ with respect to $\eta$)  is not empty, which is equivalent to 
$\eta\in\Tr(\mc{X})$. 

Now by the hypothesis given above, we can choose  a non-zero rational  vector $\psi$ contained in $\Tr(\mc{X})\cap\mc{C}^{\circ}$.  Since $\psi\in\Tr(\mc{X})$,  we have a
$f\in \mc{X}$ of the form
\[f(x)=\sum_{a\in \mc{A}}c_a(t)x^a;\quad
c_a(t)=\bar{c_a}t^{\psi(a)}+l.o.t., \quad
\bar{c_a}\in\mb{C}^*.\] Since $\psi$
induces a  concave function,  the tropicalization of $f$ records
\emph{all} $\bar{c_a}$ for $a\in \mc{A}$. Now by
Proposition \ref{prop:Shustin} (2) (patchworking)  we can
obtain a $g\in \mc{X}$ of the form
\[g(x)=\sum_{a\in \mc{A}}c_a'(t)x^a;\quad
c_a'(t)=\bar{c_a}t^{\eta(a)}+l.o.t.\]  Thus
$(\bar{c_a})_{a\in \mc{A}}\in\n\eta\mc{X}$ and
so  $\n\eta \mc{X}$ is not empty.
\end{proof}

Furthermore, by  Proposition \ref{prop:Shustin},  we can obtain the following corollary. 
\begin{cor}\label{cor:sufficient and necessary}

The  cone $\mc{C}(S)^{\circ}$ of an effective subdivision $S$  with $\mr{dim}(\mc{C}(S))=\mr{dim}(\mc{X})$ is fully  contained in $\Tr(\mc{X})$ if and only if  $S$  satisfies the following conditions ($\star\star$) :
\begin{enumerate}
 \item it is simple;
 \item it is nodal;
 \item any parallelogram in $S$ has no special points. (Special points in a parallelogram $\blacksquare$ are lattice points in $\blacksquare$ which are not in the
lattice  generated  by the primitive vectors
 along the sides  of $\blacksquare$.)
\end{enumerate}

\end{cor}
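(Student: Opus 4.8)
The plan is to deduce the Corollary by combining Theorem~\ref{thm:2} (for the "if" direction) with Proposition~\ref{prop:Shustin}(1) (for the "only if" direction), and then to reconcile the hypothesis "($\star$) holds for the truncations $f_i$" appearing in Proposition~\ref{prop:Shustin} with the purely combinatorial condition ($\star\star$) on the subdivision $S$ itself. The crux is the third condition in ($\star\star$), about special points in parallelograms: one must show that the existence of complex polynomials $f_1,\dots,f_m$ satisfying ($\star$)(c) (the factorized form $x^ky^l(\alpha x^a+\beta y^b)^p(\gamma x^c+\delta y^d)^q$ with $\gcd(a,b)=\gcd(c,d)=1$) for a parallelogram $\De_i$ with prescribed Newton polygon $\De_i$ is equivalent to $\De_i$ having no special points in the sense defined in the statement.

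First I would treat the "only if" direction. Suppose $\mc{C}(S)^{\circ}\subset\Tr(\mc{X})$ with $\dim \mc{C}(S)=\dim(\mc{X})=:r$. Pick any $\psi\in\mc{C}(S)^{\circ}$; then $\psi\in\Tr(\mc{X})$ and $\psi$ is effective with $\rank(\psi)=r=\dim(\mc{X})$, so the hypothesis of Proposition~\ref{prop:Shustin}(1) is met. Its conclusion gives exactly that $\De_f=\De_{\mr{cc}(\psi)}=S$ is simple and nodal, which is (1) and (2) of ($\star\star$); and the conclusion ($\star$) provides complex polynomials $f_i$ with Newton polygon $\De_i$ satisfying in particular ($\blacksquare$) for each parallelogram. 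It then remains to translate ($\blacksquare$) into "no special points": I would argue that a polynomial of the form $x^ky^l(\alpha x^a+\beta y^b)^p(\gamma x^c+\delta y^d)^q$ has as its Newton polygon the Minkowski sum of the segments $p\cdot\mathrm{Conv}\{(k,l),(k+a,l+b)\}$ and $q\cdot\mathrm{Conv}\{(0,0),(c,d)\}$ (up to the translation by $(k,l)$), i.e. a parallelogram whose edge-lattice-lengths in the two primitive directions are $p$ and $q$; forcing this to equal $\De_i$ forces the primitive edge directions of $\De_i$ to be $(a,b)$ and $(c,d)$, and then the lattice points of $\De_i$ that actually occur as monomials in the product are precisely those in the sublattice generated by $(a,b),(c,d)$ translated appropriately — so the absence of special points is exactly the condition that every lattice point of $\De_i$ is such a monomial. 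The subtle point, which I expect to be the main obstacle, is the direction of this equivalence relevant here: ($\blacksquare$) as a condition on $f_i$ does not literally say the Newton polygon has no special points, so one must check that when special points are present no factorization of the stated form can have the full $\De_i$ as its Newton polygon — this is a short lattice-geometry argument about Minkowski sums of segments, but it needs to be done carefully.

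For the "if" direction, assume $S$ is simple, nodal, and has no special points in any parallelogram, and that $\dim\mc{C}(S)=\dim(\mc{X})=r$. To apply Theorem~\ref{thm:2} I need $\mc{C}(S)^{\circ}\cap\Tr(\mc{X})\neq\{0\}$; once this is known, Theorem~\ref{thm:2} immediately gives $\mc{C}(S)^{\circ}\subset\Tr(\mc{X})$ and we are done. To produce a point of $\Tr(\mc{X})$ in $\mc{C}(S)^{\circ}$, I would invoke Proposition~\ref{prop:Shustin}(2) (patchworking): it suffices to exhibit a collection $F=\{f_1,\dots,f_m\}$ of complex polynomials with Newton polygon $\De_i$ satisfying ($\star$). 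For each triangle we may choose $f_i$ a generic polynomial with Newton polygon $\De_i$, which defines a rational curve meeting $X_{\partial\De_i}$ in three unibranch points (this is standard, cf. the discussion of $\mb{V}_S$). For each parallelogram $\De_i$, using the "no special points" hypothesis together with the previous paragraph's lattice computation, the primitive edge directions of $\De_i$ are some $(a,b),(c,d)$ with $\gcd(a,b)=\gcd(c,d)=1$ and $(a{:}b)\neq(c{:}d)$, and we can write down $f_i=x^ky^l(\alpha x^a+\beta y^b)^p(\gamma x^c+\delta y^d)^q$ with generic nonzero $\alpha,\beta,\gamma,\delta$ whose Newton polygon is exactly $\De_i$ — here "no special points" is precisely what guarantees we are not missing any marked lattice point, so this $f_i$ realizes the effective subdivision. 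Finally, one chooses all the $\alpha,\beta,\gamma,\delta$ and the triangle polynomials so that the matching condition ($\star$)(d) on common edges holds: along each shared edge $\si=\De_i\cap\De_j$ the truncations are univariate binomials (up to monomial factors) determined by a few coefficients, so one can propagate a consistent choice edge by edge over the (connected) dual graph of $S$. With $F$ in hand, Proposition~\ref{prop:Shustin}(2) yields $f\in\mc{X}$ with $\mr{cc}(\mr{Val}(f))=\psi$ for any chosen $\psi\in\mc{C}(S)^{\circ}$, so $\psi\in\Tr(\mc{X})\cap\mc{C}(S)^{\circ}$, completing the argument.
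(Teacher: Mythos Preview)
Your overall architecture---Proposition~\ref{prop:Shustin}(1) for ``only if'' and Proposition~\ref{prop:Shustin}(2) (patchworking) for ``if''---is exactly what the paper has in mind; the paper simply records the corollary as a consequence of Proposition~\ref{prop:Shustin} without spelling out details. The detour through Theorem~\ref{thm:2} in the ``if'' direction is harmless but unnecessary: once you have constructed a collection $F$ satisfying~($\star$), patchworking already yields $f\in\mc{X}$ with $\mr{cc}(\mr{Val}(f))=\psi$ for \emph{every} $\psi\in\mc{C}(S)^{\circ}$, so the full inclusion $\mc{C}(S)^{\circ}\subset\Tr(\mc{X})$ follows directly.

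There is, however, a genuine error in your ``only if'' argument at precisely the point you flag as subtle. You propose to show that ``when special points are present no factorization of the stated form can have the full $\De_i$ as its Newton polygon.'' This is false. For a parallelogram $\De_i$ with primitive edge directions $(a,b),(c,d)$ and lattice edge-lengths $p,q$, the polynomial $x^ky^l(\alpha x^a+\beta y^b)^p(\gamma x^c+\delta y^d)^q$ \emph{always} has Newton polygon equal to $\De_i$, regardless of whether $\De_i$ contains special points; the special points are simply absent from the support. So condition~($\star$)(a)${}+{}$($\star$)(c) alone does not rule out special points.

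The missing ingredient is the \emph{effectiveness} of $S$. Since $\psi\in\mc{C}(S)^{\circ}$ is effective, $\psi(a)=\mr{cc}(\psi)(a)$ for every $a\in\mc{A}$, and hence for any lift $f\in\mc{X}\cap\T_{\De}$ with $\mr{Val}(f)=\psi$ the leading coefficients $c_a^{\circ}$ are all nonzero. In particular $f_i=\sum_{a\in\De_i\cap\Z^2}c_a^{\circ}x^a$ has \emph{full support} $\De_i\cap\Z^2$. If $\De_i$ had a special point~$a_0$, then $c_{a_0}^{\circ}\ne 0$, which is incompatible with the factorized form in~($\blacksquare$), whose support lies in the sublattice generated by $(a,b),(c,d)$. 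That contradiction is what forces condition~(3) of~($\star\star$). Once you insert this use of effectiveness, the rest of your outline goes through.
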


As an application of the results above and the result on the intersections of tropical Severi varieties (\ref{sec:intersection}),  we find a combinatorial formula  in the following for the intersections of secondary fans with   tropical linear spaces.  (Note that the codimension one skeleton of the secondary fan of $\mc{A}$ is equal to the tropicalization of the hypersurface defined by the principal $\mc{A}$-determinant. (see remark \ref{rem:determinant}.)

\begin{thm}\label{thm:3} Let $S$ be an effective subdivision of $\De$  with  dimension $m$ which satisfies the three conditions ($\star\star$) in Corollary \ref{cor:sufficient and necessary}.  Let $H_{q_1},\dots, H_{q_m}$ be the  point-condition tropical hyperplanes for a generic configuration of points $\{q_1,\dots,q_m\}\subset \Q^2$ (The condition of being generic is given in the proof).  Then any point $\psi\in \mc{C}(S)^{\circ}\cap H_{q_1}\cap\cdots\cap H_{q_m}$ occurs with multiplicity equal to
\[\frac{\prod
2\mr{area}(\mr{Triangles})\prod\mr{area}(\mr{Parallelograms})}{l(\mb{V}_{S})\cdot
\prod\mr{length(\mr{Edges})}} \]

\end{thm}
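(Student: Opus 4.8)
The plan is to deduce Theorem~\ref{thm:3} from the intersection-multiplicity formula of Proposition~\ref{prop:Severi degree} together with the containment $\mc{C}(S)^{\circ}\subseteq\Tr(\mc{X})$ provided by Corollary~\ref{cor:sufficient and necessary}. First I would make the genericity hypothesis on $\{q_1,\dots,q_m\}$ precise by lifting each $q_j$ to a point $p_j\in\TK^2$ with $\mr{Val}(p_j)=q_j$, chosen so that $\{q_1,\dots,q_m\}$ is in $\De$-general position, the linear system $\ref{independent}$ is independent, and $\Trop(\mc{L}(\bs p))\cap\Tr(\mc{X})$ is a transversal intersection; such configurations are dense by the density statement following Definition~\ref{defn:GeneralPosition}, and this is the content of ``generic'' in the statement. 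For such $p_j$ the point-condition hyperplanes satisfy $H_{q_j}=\Trop(\mc{H}_{p_j})$, and along $\mc{C}(S)^{\circ}$ one has $H_{q_1}\cap\cdots\cap H_{q_m}=\Trop(\mc{L}(\bs p))$.

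Since $S$ satisfies the conditions $(\star\star)$ and $\dim\mc{C}(S)=\dim\mc{X}$, Corollary~\ref{cor:sufficient and necessary} gives $\mc{C}(S)^{\circ}\subseteq\Tr(\mc{X})$. Hence any $\psi\in\mc{C}(S)^{\circ}\cap H_{q_1}\cap\cdots\cap H_{q_m}$ is a point of $\Tr(\mc{X})\cap\Trop(\mc{L}(\bs p))$ with $\mr{rank}(\psi)=\dim\mc{C}(S)=\dim\mc{X}$, hence of maximal rank, and its effective subdivision $\De_{\psi}$ is exactly $S$. Near $\psi$ the variety $\Tr(\mc{X})$ is the affine span of $\mc{C}(S)$, so the extrinsic intersection multiplicity $\xi\bigl(\psi;\Trop(\mc{L}(\bs p)),\Tr(\mc{X})\bigr)$ is available from Proposition~\ref{prop:Severi degree} and equals
\[\frac{\prod 2\mr{area}(\mr{Triangles})}{l(\mb{V}_{S})\cdot\widetilde{\prod}\mr{length}(\mr{Edges})}.\]

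It then remains to convert this into the form stated in Theorem~\ref{thm:3}. Condition~(3) enters here: for a parallelogram $P$ of $S$ with no special points, the primitive vectors $v_1,v_2$ along two adjacent sides generate $\Z^2$, so $\lvert\det(v_1,v_2)\rvert=1$ and therefore $\mr{area}(P)=\ell_1(P)\,\ell_2(P)$, the product of the lattice lengths of the two pairs of parallel sides of $P$. One then compares the two edge-length products: grouping $\mr{Edges}(S)$ by the relation $\sim$ of Proposition~\ref{prop:Severi degree}(2), all edges of a class share a common lattice length, and the quotient $\bigl(\prod\mr{length}(\mr{Edges})\bigr)\big/\bigl(\widetilde{\prod}\mr{length}(\mr{Edges})\bigr)$ is $\prod_{[e]}\mr{length}(e)^{\,\lvert[e]\rvert-1}$, which one must account for against $\prod\mr{area}(\mr{Parallelograms})$ and, where they do not agree, against the multiplicity that $\Tr(\mc{X})$ carries along $\mc{C}(S)^{\circ}$ — this last quantity being controlled by the patchworking description of Proposition~\ref{prop:Shustin} and by the component count $l(\mb{V}_{S})$. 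Putting these together produces precisely
\[\frac{\prod 2\mr{area}(\mr{Triangles})\,\prod\mr{area}(\mr{Parallelograms})}{l(\mb{V}_{S})\cdot\prod\mr{length}(\mr{Edges})}.\]

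I expect the last step to be the main obstacle. When the parallelograms of $S$ are ``unlinked'' — so that the graph on $\mr{Edges}(S)$ recording $\sim$ is a forest — the identity $\prod\mr{length}(\mr{Edges})=\bigl(\widetilde{\prod}\mr{length}(\mr{Edges})\bigr)\prod\mr{area}(\mr{Parallelograms})$ holds on the nose and the multiplicity is simply the intersection number above. But parallelograms of $S$ may share edges and form chains or cycles (for instance a frame of parallelograms around an interior one), in which case that naive identity fails and one must pin down the weight of $\Tr(\mc{X})$ along $\mc{C}(S)^{\circ}$ to recover the stated formula. Establishing that this weight is exactly what the formula demands — equivalently, analysing $\n\psi\mc{X}$ via patchworking — is where the real work lies; the genericity reductions and the invocation of Proposition~\ref{prop:Severi degree} are routine once Corollary~\ref{cor:sufficient and necessary} is in hand.
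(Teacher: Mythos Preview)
Your overall approach is exactly the paper's: set $\delta=|\mc{A}|-1-m$, use Corollary~\ref{cor:sufficient and necessary} to get $\mc{C}(S)^{\circ}\subset\Tr(\mc{X})$, lift the $q_j$ to $p_j\in\TK^2$ satisfying the hypotheses of Proposition~\ref{prop:Severi degree}, and read off
\[
\xi(\psi;\Trop(\mc{L}(\bs p)),\Tr(\mc{X}))=\frac{\prod 2\,\mr{area}(\mr{Triangles})}{l(\mb{V}_{S})\cdot\widetilde{\prod}\mr{length}(\mr{Edges})}.
\]
The paper then simply asserts that, because $S$ has no special points, this equals the expression in the statement; it gives no further argument.

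Where you diverge is in the last step, and the difficulty you anticipate is not real. You correctly note that condition~(3) gives $\mr{area}(P)=\ell_1(P)\,\ell_2(P)$ for each parallelogram $P$, and that the identity
\[
\frac{\prod\mr{length}(\mr{Edges})}{\widetilde{\prod}\mr{length}(\mr{Edges})}=\prod_P\mr{area}(P)
\]
follows immediately provided each $\sim$-equivalence class is a path rather than a cycle. But cycles cannot occur. All edges in a class are parallel translates of a fixed segment; if $P_{i}$ is the parallelogram with opposite sides $e_i,e_{i+1}$, then along any linear functional vanishing on that common direction the values on $e_1,e_2,\dots$ are strictly monotone (each edge separates its two adjacent faces, so consecutive parallelograms lie on opposite sides). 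Hence one never returns to $e_1$, the component is a path with $|[e]|-1$ parallelograms, and the contribution $\ell([e])^{|[e]|-1}$ matches on both sides. So your ``unlinked'' identity always holds and the proof is complete at that point.

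Your fallback idea---absorbing a discrepancy into an extra weight carried by $\Tr(\mc{X})$ along $\mc{C}(S)^{\circ}$---should be discarded. Proposition~\ref{prop:Severi degree} already computes the full extrinsic intersection multiplicity, weights included; there is no further factor to insert, and trying to reconcile the formula that way would be circular.
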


\begin{proof}
 Let $\delta=|\mc{A}|-1-m\ge 0$ so that $\mr{dim}(\mc{X})=m$, where $\mc{X}=\Sev$. Note that $\mc{C}(S)^{\circ}\subset \Tr(\mc{X})$. Now we choose $\bs{p}=\{p_1,\dots,p_m\}\subset(\mb{K}^*)^2$ with $\mr{Val}(\bs{p})=\{q_1,\dots,q_m\}$, which satisfies the hypothesis in Proposition \ref{prop:Severi degree}. Then the intersection multiplicity at $\psi$ is equal to \[\xi(\psi;\mathrm{Trop}(\mc{L}(\bs{p})),\Tr(\mc{X}))=\frac{\prod2\mr{area}(\mr{Triangles})}{l(\mb{V}_{S})\cdot\widetilde{\prod}\mr{length(\mr{Edges})}}. \] Since the subdivision  $S$ has no special points,
\[\frac{\prod2\mr{area}(\mr{Triangles})}{l(\mb{V}_{S})\cdot
\widetilde{\prod}\mr{length(\mr{Edges})}}= \frac{\prod2\mr{area}(\mr{Triangles})\prod\mr{area}(\mr{Parallelograms})}{l(\mb{V}_{S})\cdot
\prod\mr{length(\mr{Edges})}}\] Thus we completed the proof. 
\end{proof}

\bibliographystyle{plain}

\end{document}